\theoremstyle{thmstyleone}%
\newtheorem{theorem}{Theorem}
\theoremstyle{thmstyletwo}%
\newtheorem{remark}{Remark}%
\newtheorem{lemma}[theorem]{Lemma}%
\theoremstyle{thmstylethree}%
\def\PP{{\mathbb P}}
\def\CC{{\mathbb C}}
\def\RR{{\mathbb R}}
\def\ZZ{{\mathbb Z}}
\def\cO{{\mathcal O}}
\def\bp{{\boldsymbol p}}
\newcommand{\bn}{\boldsymbol{n}}
\newcommand{\eps}{{\varepsilon}}
\DeclareMathOperator{\Sing}{Sing}
\DeclareMathOperator{\Vers}{Vers}
\DeclareMathOperator{\bra}{br}
\DeclareMathOperator{\conj}{conj}
\DeclareMathOperator{\Span}{Span}
\DeclareMathOperator{\ord}{ord}
\DeclareMathOperator{\mt}{mult}
\DeclareMathOperator{\Sym}{Sym}
\DeclareMathOperator{\Tor}{Tor}
\DeclareMathOperator{\TCon}{TCon}
\DeclareMathOperator{\conv}{conv}
\DeclareMathOperator{\chara}{char}
\DeclareMathOperator{\Fract}{Fr}
\def\Sig{{\Sigma}}
\begin{document}

\title[Enumeration of non-nodal real plane rational curves]{Enumeration of non-nodal real plane rational curves}



\author*[1]{\fnm{Eugenii} \sur{Shustin}}\email{shustin@tauex.tau.ac.il}


\affil[1]{\orgdiv{School of Mathematical Sciences}, \orgname{Tel Aviv University}, \orgaddress{\street{Ramat Aviv}, \city{Tel Aviv}, \postcode{6997801}, \country{Israel}}}




\abstract{Welschinger invariants 
enumerate real nodal rational curves in the plane or in another real rational surface. We analyze the existence of 
similar enumerative invariants that count real rational plane curves having
prescribed non-nodal singularities and passing through a generic conjugation-invariant configuration of appropriately many points in the plane. We show that an invariant like this is unique: it enumerates real rational three-cuspidal quartics that pass through generically chosen four pairs of complex conjugate points. As a consequence, we show that through any generic configuration of four pairs of complex conjugate points, one can always trace a pair of real rational three-cuspidal quartics.}

\keywords{Real enumerative geometry, Plane curve singularities, Versal deformation, Equisingular family}


\pacs[MSC Classification]{Primary 14N10; Secondary 14H15; 14H20; 14P05}

\maketitle

\section{Statement of the problem and main results}\label{secbs1}

Enumeration of algebraic curves in algebraic varieties is a classical problem. A typical and very important example is the enumeration of plane curves of given degree and genus. The Severi variety $V_{d,g}$ that parameterizes irreducible complex plane curves of degree $d$ and genus $g\le\frac{1}{2}(d-1)(d-2)$ is irreducible of dimension $3d+g-1$ and its generic member is a nodal curve (see \cite{Sev} and \cite{Har}). Thus, whenever we take a configuration $\bp$ of $3d+g-1$ points in $\PP^2$ in general position, the set of the curves $C\in V_{d,g}$ passing through $\bp$ is finite, and the number of such curves does not depend on the choice of $\bp$.

In contrast, the number of real curves $C\in V_{d,g}$ passing through a conjugation-invariant configuration $\bp$ does depend on the choice of $\bp$.
For example, the number of real rational cubics passing through $8$ real points in general position varies from $8$ to $12$ (see, for instance, \cite[Page 788]{DK}). In his seminal paper \cite{Wel}, J.-Y. Welschinger proved, in particular, that the enumeration of real plane rational curves of a given degree $d$, passing through a general conjugation-invariant configuration $\bp$ of $3d-1$ points, does not depend on the choice of $\bp$, provided, we fix the number of real points and the number of complex conjugate pairs in $\bp$, and we count each curve with weight $\pm1$ (so-called {\it Welschinger sign}). More precisely, 
the weight of a real nodal curve $C$ is the product of contributions of all the nodes: a non-real node contributes factor $1$, a real hyperbolic node (the transverse intersection of two real smooth branches) contributes factor $1$, while a real elliptic node (the transverse intersection of two smooth complex conjugate branches) contributes factor $-1$.

It is natural to ask whether Welschinger's idea provides invariants in other real enumerative problems. Welschinger showed that the invariance extends to enumeration of real pseudoholomorphic curves in real rational symplectic four-folds \cite{Wel} and to enumeration of real rational curves in $\PP^3$ \cite{Wel2}. However,
Welschinger's signed count fails to be invariant of the choice of constraints in the following cases:
\begin{itemize}\item enumeration of real plane curves of a given degree and a given {\it positive} genus \cite[Section 3]{IKS0};
\item enumeration of real plane rational curves of a given degree, having one ordinary cusp and ordinary nodes as the rest of singularities \cite{Wel1}\footnote{In fact, Welschinger shows in \cite{Wel1} that one can keep the invariance by adding several correcting terms that count curves of rather different types.}.
\end{itemize}

The present paper aims to analyze in depth the latter situation and to answer the following

\smallskip{\bf Question:} {\it Does there exist a real enumerative invariant counting real plane rational curves of a given degree that have non-nodal singularities of prescribed types and nodes (if any) as the rest of singularities?}

\smallskip

Our {\bf answer} is: {\it In a reasonable setting, there are no such real enumerative invariants} except for a single case of three-cuspidal quartics.

\smallskip
To formulate the main results, we use some elements of the singularity theory of plane algebraic curves, see, for instance, \cite[Chapter I, Sections 2 and 3]{GLS0} or Section \ref{sec-sing} below, where we provide all necessary definitions, notations, and basic facts. 

For a sequence $S_1,...,S_r$ of (complex) topological types of isolated plane curve singular points, denote by $V_d(S_1,...,S_r)\subset\vert\cO_{\PP^2}(d)\vert$ the locus parameterizing complex 
plane curves of degree $d$ having $r$ singular points
of types $S_1,...,S_r$, respectively, and no other singularities. In general, it is the union of quasiprojective varieties and traditionally is called an {\it equisingular family}. We also introduce the sublocus $V_d^{irr}(S_1,...,S_r)\subset V_d(S_1,...,S_r)$ parameterizing irreducible curves. The elements of $V_d^{irr}(S_1,...,S_r)$ are rational whenever $\delta(S_1)+...+\delta(S_r)=\frac{1}{2}(d-1)(d-2)$. By $V_d^{\RR}(S_1,...,S_r)\subset V_d(S_1,...,S_r)$, resp. $V_d^{irr,\RR}(S_1,...,S_r)\subset V_d^{irr}(S_1,...,S_r)$ we denote the subsets parameterizing real curves. If $V_d(S_1,...,S_r)$ is smooth of pure complex dimension $n$, then $V^{\RR}_d(S_1,...,S_r)$ is smooth of pure real dimension $n$. For a nonnegative partition $n=n_{re}+2n_{im}$, denote by ${\mathcal P}_{n_{re},n_{im}}\subset\Sym^n(\PP^2)$ the space of configurations consisting of $n_{re}$ real points and $n_{im}$ pairs of complex conjugate points. Under the above dimnsion condition, a generically chosen configuration $\bp\in{\mathcal P}_{n_{re},n_{im}}$ defines a finite set
$$V^{\RR}_d(S_1,...,S_r,\bp)=\left\{C\in V^{\RR}_d(S_1,...,S_r)\ :\ C\supset\bp\right\}.$$

We want to enumerate the elements of $V_d^{irr,\RR}(S_1,...S_r,\bp)$ in an invariant way with respect to the choice of $\bp$. For that, we introduce Welschinger-like weights defined as follows. By a real plane curve singularity we call either the germ of a real plane curve at a real singular point, or a pair of germs of a real plane curve at two complex conjugate singular points. By a real topological type of a real plane curve singularity we mean the equivalence class of such real singularities up to an equivariant homeomorphism of a neighborhood\footnote{For a pair of non-real complex conjugate singular points, the real topological type is just the complex topological type of one of these points.}. Denote by $\RR TT$ the set of all real topological types of real plane curve singularities. A {\it real singularity weight} is a map
$$\varphi:\RR TT\to R\setminus\{0\},$$
where $R$ is an integral domain whose fraction field satisfies $\chara\Fract(R)\ne2$. For a real singularity weight $\varphi$, we define the {\it Welschinger-like weight} of a real irreducible plane curve $C$:
$$w_\varphi(C)=\prod_{z\in\Sing(C)\cap\PP^2_\RR}\varphi(C,z)\cdot\prod_{z,\conj z\in\Sing(C)\setminus\PP^2_\RR}\varphi(C,\{z,\conj z\}).$$
For example, we obtain the original Welschinger sign by choosing $R=\ZZ$ and setting
$$\varphi(A_1^e)=-1,\quad\varphi(A_1^h)=1,\quad \varphi(2A_1^{im})=1,$$
where $A_1^e$ and $A_1^h$ denote the types of an elliptic and a hyperbolic node, respectively, and $(aA_1^{im})$ is a pair of non-real nodes.

\begin{theorem}\label{t-rev1}
With a single exception specified in Theorem \ref{t-rev2}, for any integer $d\ge4$, any smooth family of non-nodal rational curves $V^{irr}_d(S_1,...,S_r)$ of pure dimension $n$, any nonnegative partition $n=n_{re}+2n_{im}$, and any real singularity weight $\varphi$, that satisfy some extra mild restrictions, the value
$$\sum_{C\in V^{irr,\RR}_d(S_1,...,S_r,\bp)}w_\varphi(C)$$
{\bf does depend} on the choice of a generic configuration $\bp\in{\mathcal P}_{n_{re},n_{im}}$.
\end{theorem}

Precise statements are presented in Theorems \ref{tbs3new}, \ref{tbs1new}, and \ref{tbs2new} in Sections \ref{secbs8new}, \ref{secbs11}, and \ref{secbs12}, respectively.

\begin{theorem}\label{t-rev2}
The family $V_4^{irr}(3A_2)$ is smooth of the expected dimension $8$. The number $$W(4,3A_2;(0,4)):=\#V^{irr,\RR}_4(3A_2,\bp),\quad\text{where}\quad\bp\in{\mathcal P}_{0,4},$$ is invariant for the choice of a generic configuration $\bp\in{\mathcal P}_{0,4}$. Furthermore, $W(4,3A_2;(0,4))$ is a positive even integer. In particular, through any generic configuration of four pairs of complex conjugate points, one can trace at least two real three-cuspidal quartics.
\end{theorem}

In this example, the real singularity weight is just $\varphi\equiv1\in\ZZ$. We do not know how to compute the value of the invariant $W(4,3A_2;(0,4))$. However, it is easy to see that it is positive: this follows from the invariance stated in the above theorem and from the fact that one can take a real three-cuspidal quartic, choose a configuration of four pairs of complex conjugate points on it, and then deform the quartic keeping three cusps and moving the configuration into a general position. At last, we note that $W(4,3A_2;(0,4))$ has the same parity as the number of the complex three-cuspidal quartics passing through $8$ points in general position, and the latter number equals $140$ (see \cite[Example 10.2]{Ka}).

\begin{remark}\label{r-rev1} It is worth noticing that, in contrast to the global situation addressed in Theorem \ref{t-rev1}, in the local setting, there exist an infinite series of enumerative invariants counting real nodal-cuspidal deformations of real curve singularities \cite[Propositions 4.1 and 4.3]{Sh20}.
\end{remark}

The strategy of the proof of Theorems \ref{t-rev1} and \ref{t-rev2} is similar to the argument used in \cite{IKS,Wel,Wel1} and consists in the analysis of wall-crossing events along a path joining two point constraints. These wall-crossing events are described in Section \ref{secbs3}. As example, in Section \ref{secbs2}, we consider real cuspidal cubics and describe two wall-crossing events breaking the invariance of enumeration (cf. \cite{Wel1}). In Sections \ref{secbs8new}, \ref{secbs11}, and \ref{secbs12}, we prove three statements on the failure of enumeration invariance that together cover the range of Theorem \ref{t-rev1}. In Section \ref{secbs4}, we prove Theorem \ref{t-rev2}.
At last, in Section \ref{secbs6new} we
demonstrate an example of a slightly modified real enumerative problem, in which the count of real curves appears to be invariant.
A generalization of this observation will be considered in a forthcoming paper.

\section{Preliminary information on plane curves and their singularities}\label{sec-sing}
To make the reading of the main body of the paper easier, in this section we recall necessary definitions and facts on plane curves, their singularities, and versal deformations (unfoldings) of singular points. A systematic theory of this stuff can be found in various sources. The closest ones to our setting are: \cite[Sections I.2 and I.3]{GLS0} (plane curves singularities and their invariants) and \cite[Sections 1, 3, 4, and 5]{DH}, \cite[Chapter II]{GLS0} (versal deformations and their stratification).

{\bf(1)} Let $(C,z)\subset\PP^2$ be a germ of a plane curve $C$ at an isolated singular point $z$. In the ball $B(C,z)\subset\PP^2$ of a sufficiently small radius centered at $z$ (called {\it Milnor ball}),
the intersection $(C,z)\cap B(C,z)$ is a bouquet of topological disks with the common center $z$, smooth outside $z$, and transversally intersecting $\partial B(C,z)$ along their boundaries. Each of these disks is called a {\it local branch} of $C$ centered at $z$. If $f(x,y)=0$ is a local equation of $(C,z)$, $z=(0,0)$, then the local branches are given by $f_i(x,y)=0$, where $f=f_1\cdot...\cdot f_r$ is the decomposition of $f$ into irreducible factors in the ring $\CC\{x,y\}$ of germs of holomorphic functions.

Two germs $(C,z), (C',z')\subset\PP^2$ are called {\it topologically equivalent} if there is a homeomorphism $B(C,z)\overset{\sim}{\to}B(C',z')$ taking $z$ to $z'$ and $(C,z)\cap B(C,z)$ to $(C',z')\cap B(C',z')$. A topological equivalence class is usually called {\it topological type}.

\smallskip{\bf(2)} In the sequel, we use several classical topological invariants of a plane curve germ $(C,z)$ given by a local affine equation $f(x,y)=0$.
\begin{itemize} \item The number of local branches $\bra(C,z)$.
\item The multiplicity $\mt(C,z)$ which is the minimal degree of a monomial in the Taylor expansion of $f(x,y)$ at $z=(0,0)$.
\item The Milnor number $\mu(C,z)=\dim_\CC\CC\{x,y\}/\langle \partial f/\partial x,\partial f/\partial y\rangle$.
\item The $\delta$-invariant $\delta(C,z)$ which can be defined as the maximal number of nodes that appear in a small deformation of the germ $(C,z)$. It is well-known that the geometric genus equals
    $$g(C)=\frac{(d-1)(d-2)}{2}-\sum_{i=1}^r\delta(S_i)\quad\text{for any curve}\quad C\in V^{irr}_d(S_1,...,S_r),$$
    and that $\mu(C,z)$ and $\delta(C,z)$ are related as follows:
    $$\mu(C,z)=2\delta(C,z)-\bra(C,z)+1.$$
    \item The $\varkappa$-invariant $\varkappa(C,z)$ which is the intersection number at $z$ of $C$ with its generic polar curve $\{\alpha\partial f/\partial x+\beta\partial f/\partial y=0\}$, $[\alpha:\beta]\in\PP^1$. It appears in the generalized Pl\"ucker formula for the degree of the dual curve
        $$d^*=d(d-1)-\sum_{i=1}^r\varkappa(S_i) \quad\text{for any curve}\quad C\in V^{irr}_d(S_1,...,S_r).$$
       The $\varkappa$-invariant can be computed via the other invariants:
        $$\varkappa(C,z)=\mu(C,z)+\mt(C,z)-1=2\delta(C,z)+\mt(C,z)-\bra(C,z).$$
\end{itemize}

\smallskip{\bf(3)} Every isolated plane curve singular point $(C,z)$ admits a versal deformation, i.e., a deformation which induces any other deformation. Each versal deformation is a cylinder over the semi-universal deformation
$$\begin{matrix}(C,z) & \hookrightarrow & \widetilde\Vers(C,z)\\ 
\downarrow & & \downarrow\pi\\ 0 & \hookrightarrow & \Vers(C,z)\end{matrix}$$
with the base $\Vers(C,z)$ isomorphic to the germ at $0$ of the space ${\mathcal O}_{C,z}/J(f)$,
where $J(f)=\langle\partial f/\partial x,\partial f/\partial y\rangle$ is the Jacobian ideal. The fibers $\pi^{-1}(b)$, $b\in\Vers(C,z)$, can be regarded as plane algebraic curves restricted to the Milnor ball $B(C,z)$ which all are smooth along $\partial B(C,z)$ and intersect $\partial B(C,z)$ transvesally. Note also that for simple ADE singularities, $\dim\Vers(C,z)=\mu(C,z)$. The semi-universal deformation is often represented as a {\it (uni)versal unfolding}
$$F(x,y,t_1,...,t_k)=f(x,y)+\sum_{i=1}^kt_ih_i,\quad (t_1,...,t_k)\in(\CC^k,0)\simeq\Vers(C,z),$$
where $h_1,...,h_k\in{\mathcal O}_{C,z}$ induce a basis of ${\mathcal O}_{C,z}/J(f)$, and
$$B(C,z)\times\Vers(C,z)\supset\{F(x,y,t_1,...,t_k)=0\}\simeq\widetilde\Vers(C,z)$$
is a fiberwise isomorphism.

The equisingular locus $ES(C,z)\subset\Vers(C,z)$ parameterizes elements $\widetilde\Vers(C,z)_b=\pi^{-1}(b)$, $b\in\Vers(C,z)$, that are topologically equivalent to $(C,z)$. It is the germ of a smooth analytic subvariety with the tangent space $I^{es}(C,p)/J(f)$, where $I^{es}(C,z)\subset{\mathcal O}_{C,z}$ is the so-called equisingular ideal. Note that for simple ADE singular point, $I^{es}(C,z)=J(f)$.

The number $$c(C,z)=\dim_\CC{\mathcal O}_{C,z}/I^{es}(C,z)$$ (called an {\it expected codimension}) is a topological invariant. Roughly speaking, it counts the number of conditions imposed on plane curves by a singularity of a given topological type. For example, for simple ADE singularities, $ES(C,z)=0\in\Vers(C,z)$, and $c(C,z)=\mu(C,z)$.

Some other loci in $\Vers(C,z)$ are used in the sequel. Denote by $\Sigma(S_1,...,S_k)\subset\Vers(C,z)$ the locus parameterizing elements $\widetilde\Vers(C,z)_b$, $b\in\Vers(C,z)$, which have exactly $k$ singular points of topological types $S_1,...,S_k$, respectively. This is a locally closed analytic subvariety of expected codimension $\sum_{i=1}^kc(S_i)$.

\smallskip{\bf(4)} The {\it expected dimension} of an equisingular family $V_d(S_1,...,S_r)$ is defined to be
 $$\dim\vert\cO_{\PP^2}(d)\vert-\sum_{i=1}^rc(S_i)=\frac{d(d+3)}{2}-\sum_{i=1}^rc(S_i).$$
The actual dimension of $V_d(S_1,...,S_r)$ equals the expected one, if the family $V_d(S_1,...,S_r)$ is {\it $T$-smooth}. We recall the latter notion here. Let $C\in V_d(S_1,...,S_r)$ have singular points $z_1,...,z_r$ of topological types $S_1,...,S_r$, respectively. Denote by $\widehat V_d(S_i,z_i)$, $i=1,...,r$, the germ at $C$ of the family of plane curves of degree $d$ having a singular point of type $S_i$ in a small neighborhood of $z_i$ and maybe some other singular points. We say that the germ $\widehat V_d(S_i,z_i)$ is $T$-smooth if it is smooth of codimension $c(S_i)$ in $\vert\cO_{\PP^2}(d)\vert$ and we say that the family $V_d(S_1,...,S_r)$ is $T$-smooth at $C$ if all the germs $\widehat V_d(S_i,z_i)$, $i=1,...,r$, are $T$-smooth and they intersect transversally at $C$. Correspondingly, we say that the whole family $V_d(S_1,...,S_r)$ is $T$-smooth if it is $T$-smooth at each $C\in V_d(S_1,...,S_r)$. In terms of the preceding item (3), the tangent space at $C$ to a $T$-smooth locus $V_d(S_1,...,S_r)$ can be identified with
$$H^0(C,{\mathcal J}_{Z^{es}(C)/C}\otimes{\mathcal O}_C(d)),$$
where the zero-dimensional subscheme $Z^{es}(C)\subset C$ is supported at $\Sing(C)$ and for each $z\in Sing(C)$, is defined by the ideal $I^{es}(C,z)\subset{\mathcal O}_{C,z}$, while ${\mathcal J}$ stands for the ideal sheaf.

There are several numerical conditions ensuring the $T$-smoothness of families of plane curves (see \cite[Sections 4.3, 4.4, and 4.6]{GLS} for a complete account). We recall here a few particular instances of such conditions restricted to curves with simple ADE singularities.

\begin{lemma}\label{TS}
Let $S_1,...,S_r$ be simple ADE singularity types.

(1) If
\begin{equation}\sum_{i=1}^r(\mu(S_i)-1)<3d,\label{eTS1}\end{equation}
then the family $V_d^{irr}(S_1,...,S_r)$ is either empty, or $T$-smooth.

(2) Let $C\in V_d^{irr}(S_1,...,S_r)$ have singular points $z_1,...,z_r$ of types $S_1,...,S_r$, respectively. Consider the germ at $C$ of the locus $$V^{irr}_d(S_1^{fix},...,S_k^{fix}, S_{k+1},...,S_r):=
\{C'\in V^{irr}_d(S_1,...,S_r)\ :\ z_1,...,z_k\in\Sing(C')\}$$
(that is, $(C',z_i)$ is of type $S_i$ for each $i=1,...,k$). If
\begin{equation}\sum_{i=1}^k\lambda(S_i)+\sum_{i=k+1}^r(\mu(S_i)-1)<3d,\label{eTS2}\end{equation} where $\lambda(S)$ stands for the invariant listed at \cite[The last row in the table on page 443]{GLS},
then the locus $V_d^{irr}(S_1^{fix},...,S_k^{fix},S_{k+1},...,S_r)$ is smooth of dimension
$$\frac{d(d+3)}{2}-\sum_{i=1}^k(\mu(S_i)+2)-\sum_{i=k+1}^r\mu(S_i).$$
\end{lemma}

The next lemma concerns certain families of reducible curves too.

\begin{lemma}\label{TS1}
Let $S_1,...,S_r$ be simple ADE singularity types.

(1) If
\begin{equation}\sum_{i=1}^r\mu(S_i)<4d-4,\label{eTS3}\end{equation}
then the family $V_d(S_1,...,S_r)$ is either empty, or $T$-smooth.

(2) Let $C\in V_d(S_1,...,S_r)$ and $C$ split into irreducible components $C_1,C_2$ of degrees $d_1,d_2$, respectively, so that $C_1\cap C_2\cap (\Sing(C_1)\cup\Sing(C_2))=\emptyset$. If
\begin{equation}\sum_{z\in\Sing(C_j)}(\mu(C_j,z)-1)+d_1d_2-\#(C_1\cap C_2)<3d_j,\quad j=1,2,\label{eTS4}\end{equation}
then the locus $V_d(S_1,...,S_r)$ is $T$-smooth at $C$.
\end{lemma}

At last, we give sufficient conditions for the versality of deformations induced by families of plane curves.

\begin{lemma}\label{TS2}
Let $S_1$ be a simple ADE singularity type. Suppose that $V_d(S_1,...,S_r)$ is $T$-smooth at $C\in V_d(S_1,...,S_r)$ and that $z_1,...,z_r$ are the singular points of $C$ of types $S_1,...,S_r$, respectively. Then the germ at $C$ of the family
$$\bigcap_{i=2}^r\widehat V_d(S_i,z_i)$$
induces a versal deformation of the germ $(C,z_1)$.
\end{lemma}

\smallskip
{\bf(5)}
In case of real plane curves, we consider {\it real topological types} of singularities:
\begin{itemize}\item two germs $(C,z)$, $(C',z')$ of real plane curves $C$ and $C'$ at real singular points $z$ and $z'$ are of the same real topological type, if there exists an equivariant homeomorphism of their Milnor balls $B(C,z)\overset{\sim}{\to}B(C',z')$ taking $C\cap B(C,z)$ to $C'\cap B(C',z')$;
\item the real topological type of a pair of non-real complex conjugate singular points $z,\conj z$ of a real plane curve $C$ is the (complex) topological type of the germ $(C,z)$ (which, of course, is the same as for $(C,\conj z)$).
\end{itemize}
For example, real unibranch singularities of the complex topological type $A_{2k}$ ($k\ge1$) are of the same real topological type, while real singularities of the complex topological type $A_{2k-1}$ ($k\ge1$) split into two real topological types $A^e_{2k-1}$ and $A^h_{2k-1}$ according as the two local branches are complex conjugate or both are real.

We say that two curves $C_1,C_2\in V_d^\RR(S_1,...,S_r)$ are {\it $\RR$-isosingular}, if they have the same collection of real topological types of their singular points.

\smallskip{\bf(6)} Given the germ $(V,p)\subset\CC^N$ of an analytic variety, we speak of the geometric tangent cone $\TCon_pV$ as the closure in $T_p\CC^N$ of the union of the limits of tangent spaces $T_{p'}V$ at a sequence of smooth points $p'\in V$ converging to $p$. For example, if $p$ is a smooth point then $\TCon_pV=T_pV$, if $V$ is one-dimensional then $\TCon_pV$ is the union of lines. 

\smallskip{\bf(7)} We recall the following rather simple statement well-known to the experts, which we shall use in the construction of certain singular plane curves. For the reader's convenience we sketch a proof.

\begin{lemma}\label{lbs7}
Let $\Tor(P)$ be a toric surface associated with a non-degenerate convex lattice polygon $P\subset\RR^2$, and ${\mathcal L}_P$ the tautological line bundle on $\Tor(P)$. Then there exists a real rational nodal curve $C\in\vert{\mathcal L}_P\vert$, which is smooth along the toric divisors and has prescribed distribution of intersection multiplicities with toric divisors.
\end{lemma}

\begin{proof}
Let $P^1$ be the set of sides of $P$, $n_\sigma=c_1({\mathcal L}_P)[\Tor(\sigma)]$ the lattice length of the side $\sigma\in P^1$, $n_\sigma=\sum_in_{\sigma,i}$ a partition, $\vec{a}_\sigma=(a_{\sigma,x},a_{\sigma,y})\in\ZZ^2$ the primitive integral inner normal to $\sigma$. Then the desired real nodal curve is given by a parametrization
$$x=\prod_{\sigma\in P^1}\prod_i(t-\gamma_{\sigma,i})^{n_{\sigma,i}a_{\sigma,x}},\quad y=\prod_{\sigma\in P^1}\prod_{i}(t-\gamma_{\sigma,i})^{n_{\sigma,i}a_{\sigma,y}},$$
where $\{\gamma_{\sigma,i}\}_{\sigma,i}$ is a generic collection of real numbers.

It is enough to exhibit just one nodal rational curve in $\Tor(P)$ with the given intersection profiles along toric divisors. One can construct it using the version of Mikhalkin' correspondence theorem \cite{Mik} suggested in \cite[Sections 3 and 5]{Sh06} (see also \cite[Section 2.5]{IMS}). Namely, one takes a family of rational curves in $\Tor(P)$ parameterized by a punctured disk and extends it to the central point so that the central surface splits into the union of toric surfaces governed by a plane trivalent rational tropical curve, while the central (algebraic) curve is the union of rational components embedded into components of the central surface. Then one verifies that each component of the central curve is nodal \cite[Lemma 3.5]{Sh06} and that each intersection point of two components of the central curve deforms into a nodal (or smooth) fragment of the nearby fiber of the original family \cite[Lemma 3.9]{Sh06}.
\end{proof}

\section{Three types of wall-crossing events}
\label{secbs3}

Let $V_d(S_1,...,S_r)$ be $T$-smooth of dimension $n=n_{re}+2n_{im}$. Consider a generic path $\{\bp(t)\}_{0\le t\le 1}\subset{\mathcal P}_{n_{re},n_{im}}$. Along the complement $[0,1]\setminus F$ of a finite subset $F$, the natural projection
$$\bigcup_{t\in[0,1]\setminus F}\big(V_d^{\RR}(S_1,...,S_r,\bp(t)),t\big)\to[0,1]\setminus F$$
is a smooth unramified covering. Here, we study changes (wall-crossing events) in the set  $V_d^{\RR}(S_1,...,S_r,\bp(t))$ when $t$ moves through a point of $F$. We do not consider all possible walls, but select only those we use in the proof of the main results.

\subsection{Wall-crossing: Degeneration of singularities}\label{secbs6}
{\bf(1)} Let $(C,z)$ be a germ of a real plane curve with a real isolated singular point $z$ of a simple topological type $S$, i.e., $S\in\{A_k,k\ge1,D_k,k\ge4,E_k,k=6,7,8\}$.
Let $\Sigma(S_1,...,S_k)\subset\Vers(C,z)$ be one-dimensional, and let the closure $\overline\Sig(S_1,...,S_k)\subset\Vers(C,z)$ be an irreducible analytic curve germ at $0$.
In particular, $\TCon_0\overline\Sigma(S_1,...,S_k)$ is a line. Pick a real (i.e., conjugation invariant) hyperplane $H\subset\Vers(C,z)$ transversally intersecting $\TCon_0\overline\Sigma(S_1,...,S_k)$ at $0$. Consider the family of intersections of $\Sigma_\RR(S_1,...,S_k)$ with real affine hyperplanes parallel to $H$. We call this family of intersections the {\it bifurcation of} $(C,z)$ of type $(S_1,...,S_k\Longrightarrow S)$.

\begin{lemma}\label{local-global}
In the notation and under the assumptions of the preceding paragraph, let $C\in V^\RR_d(S,S_{k+1},...,S_r)$ have singular points $z,z_{k+1},...,z_r$ of types $S,S_{k+1},...,S_r$, respectively, and let $V_d(S,S_{k+1},...,S_r)$ be $T$-smooth at $C$. Suppose also that
the germ $(V_d(S,S_{k+1},...,S_r),C)$ is contained in the closure $\overline V_d(S_1,...,S_k,S_{k+1},...,S_r)$, where $V_d(S_1,...,S_k,S_{k+1},...,S_r)$ is $T$-smooth of dimension $n=n_{re}+2n_{im}$.

If $\bp\in{\mathcal P}_{n_{re},n_{im}}$ and $\bp$ lies in general position on $C$, then 
there exists a path $\{\bp(t)\}_{-\eps<t<\eps}\subset{\mathcal P}_{n_{re},n_{im}}$ such that $\bp(0)=\bp$, and the germ at $C$ of the family $\{V_d^{\RR}(S_1,...,S_r,\bp(t))\}_{0<\vert t\vert<\eps}\to t\in(-\eps,\eps)\setminus\{0\}$ is fiberwise homeomorphic to the bifurcation of $(C,z)$ of type $(S_1,...,S_k\Longrightarrow S)$.
\end{lemma}

\begin{proof}
Taking into account that the germ of $\bigcap_{i=k+1}^r\widehat V_d(S_i,z_i)$ at $C$ induces a versal deformation of the curve germ $(C,z)$ (see Lemma \ref{TS2}), we only need to show that the linear system
\begin{equation}\Lambda_d(\bp):=\{C'\in\vert \cO_{\PP^2}(d)\vert\ :\ \bp\subset C'\}\label{Lambda}\end{equation}
has codimension $n$ and intersects transversally with $\TCon_C\overline V_d(S_1,...,S_r)$. Note that the required property does not depend on the real structure and can be considered within complex algebraic geometry. This we pick one-by-one $n-1$ points $p_1,...,p_{n-1}\in C$ so that $\Lambda_d(\{p_i\}_{i=1}^{n-1})$ has codimension $n-1$ and intersects transversally $T_CV_d(S,S_{k+1},...,S_r)$. This linear system intersects $\TCon_C\overline V_d(S_1,...,S_r)$ along a line spanned by $C$ and some other curve $C'$ of degree $d$. Then we define $\bp=\{p_i\}_{i=1}^n$ choosing the last point $p_n\in C\setminus C'$.
\end{proof}

\medskip\noindent{\bf(2)} {\it Deformations of $A_m$ singularities.} Recall that $A_m$ is the topological type of a complex plane curve singularity $y^2+x^{m+1}=0$, and that, for even $m$, there is unique real topological type also denoted by $A_m$ and for odd $m$, there are two real topological types $A^e_m$ represented by $y^2+x^{m+1}=0$ and $A^h_m$ represented by $y^2-x^{m+1}=0$.

\begin{lemma}\label{lbs1}
Let $(C,z)$ be a plane curve singularity of type $A_m$, $m\ge2$. Then

(1) The one-dimensional equisingular loci in $\Vers(C,z)$ are $\Sig(A_{m-1})$ and $\Sig(A_i,A_j)$, where $1\le i\le j$, $i+j=m-1$.

(2) The closures $\overline\Sig(A_{m-1})$ and $\overline\Sig(A_i,A_j)$, $1\le i<j$, $i+j=m-1$, are curve germs with a singularity of type $A_2$ at the origin and the tangent cone a line. If $m=2k+1$, $k\ge1$, then the closure $\overline\Sig(2A_k)$ is a smooth curve germ.

(3) Suppose that $(C,z)$ is real and $m$ is even. Then one half of $\Sig_\RR(A_{m-1})$ parameterizes curves with a real singularity of type $A^e_{m-1}$, while the other half parameterizes curves with a real singularity of type $A^h_{m-1}$. If $i,j\ge1$, $i+j=m-1$, $i$ is odd, $j$ is even, then one half of $\Sig_\RR(A_i,A_j)$ parameterizes curves with two real singular points of types $A^e_i$, $A_j$, while the other half parameterizes curves with two real singular points of types $A^h_i$, $A_j$.

(4) Suppose that $(C,z)$ is real and $m=2k+1$, $k\ge1$. Then
\begin{itemize}\item The locus $\Sig_\RR(A_{m-1})$ parameterizes curves with one real singular point of type $A_{m-1}$.
\item The locus $\Sig_\RR(A_i,A_j)$, where $i<j$ are odd, parameterizes curves with two real singular points of types $A^e_i$, $A^e_j$, resp. $A^h_i$, $A^h_j$, according as $(C,z)$ is of type $A^e_m$, resp. $A^h_m$.
    \item The locus $\Sig_\RR(A_i,A_j)$, where $i<j$ are even, parameterizes curves with two real singular points of types $A_i$ and $A_j$.
    \item If $k$ is even, then one half of the locus $\Sig_\RR(2A_k)$ parameterizes curves with two real points of type $A_k$, while the other half parameterizes curves with a pair of non-real complex conjugate singular points. If $k$ is odd, then one half of the locus $\Sig_\RR(2A_k)$ parameterizes curves with two real singular points of type $A^e_k$ (resp. $A^h_k$) of $(C,z)$ is of type $A^e_m$ (resp. $A^h_m$), while the other half parameterizes curves with two non-real complex conjugate singular points.
\end{itemize}
\end{lemma}

\begin{proof}
(1) This is a well-known fact, see \cite[Theorem 4]{Lya}.

(2) The universal unfolding of the singular point $z=(0,0)$ of type $A_m$ is $$y^2+x^{m+1}+\sum_{k=0}^{m-1}\lambda_kx^k=0,\quad(\lambda_0,...,\lambda_{m-1})\in(\CC^n,0).$$
Then the loci $\overline\Sig(A_{m-1})$ and $\overline\Sig(A_i,A_j)$, $1\le i<j$, $i+j=m-1$, parameterise curves
$$y^2+(x-mt)(x+t)^m=0\quad\text{and}\quad y^2+(x-(j+1)t)^{i+1}(x+(i+1)t)^{j+1}=0,\quad t\in(\CC,0),$$
respectively. In both cases we have
$$\lambda_k=a_kt^{n+1-k},\quad a_k\ne0,\quad k=0,...,n-1,$$
and the second statement follows.

(3) Let $m=2k$, $k\ge1$, and $(C,z)=\{y^2+x^{2k+1}=0\}$, $z=(0,0)$. Then (see item (2))
$$\overline\Sig_\RR(A_{m-1})=\{y^2+(x-mt)(x+t)^n=0,\ t\in(\RR,0)\},$$
and hence one obtains the singularity $A_{m-1}^h$ for $t>0$, and the singularity $A_{m-1}^e$ for $t<0$. The same argument settles the case of the locus $\Sig_\RR(A_i,A_j)$.

(4) Let $m=2k+1$, $k\ge1$, and $(C,z)=\{y^2\pm x^{2k+2}=0\}$, $z=(0,0)$. The only last case in item (4) requires comments. Indeed,
\begin{align}\overline\Sig_\RR(2A_k)=&\big\{y^2\pm(x-t)^{k+1}(x+t)^{k+1}=y^2\pm(x^2-t^2)^{k+1}=0,\nonumber\\ &\qquad t\in(\RR,0)\cup(\RR\sqrt{-1},0)\big\},\label{eA5}\end{align}
and hence $\overline\Sig(2\cdot A_k)$ is regularly parameterised by $\tau=t^2\in(\RR,0)$. The rest of statements follow from the above formula.
\end{proof}

{\medskip\noindent{\bf(3)} {\it Deformations of $E_6$ singularity.}

\begin{lemma}\label{lbs3}
Let a plane curve germ $(C,z)$ be of type $E_6$. Then the locus $\overline\Sig(A_1,2A_2)\subset \Vers(C,z)$ is one-dimensional and smooth. If $(C,z)$ is real, then one component of $\Sig_\RR(A_1,2A_2)$ parameterises curves with a hyperbolic node $A_1^h$ and two real cusps, while the other one - curves with an elliptic node $A_1^e$ and two complex conjugate cusps.
\end{lemma}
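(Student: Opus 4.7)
The plan is to construct the stratum $\Sigma(A_1,2A_2|E_6)$ explicitly, exploiting the quasi-homogeneity of the normal form $y^3+x^4=0$, and then to identify its two real components by a direct Hessian computation at the node.

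For part~(1), I use that $y^3+x^4$ is quasi-homogeneous with weights $\wt(x)=3$, $\wt(y)=4$, $\wt(F)=12$, so the miniversal base $B(C,p)\simeq\C^6$, spanned by the Tjurina-algebra basis $\{1,x,y,x^2,xy,x^2y\}$ whose elements have weights $12,9,8,6,5,2$, carries a natural $\C^*$-action by weighted rescaling. This action preserves every equisingular stratum. The expected codimension of $\Sigma(A_1,2A_2|E_6)$ is $c(A_1)+2c(A_2)=5$, and the $\C^*$-orbit structure then forces the stratum, if nonempty, to be a single smooth $1$-dimensional $\C^*$-orbit. This yields assertion~(1).

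To realise such an orbit, I invoke the residual involution $\sigma:(x,y)\mapsto(-x,y)$ of $y^3+x^4$; on $B(C,p)$ it flips the signs of the coefficients of the odd monomials $x$ and $xy$. Restricting to $\sigma$-invariant deformations and looking for curves with two cusps at the $\sigma$-orbit $(\pm x_c,y_c)$ and a node at the $\sigma$-fixed point $(0,y_n)$, I take
\[
F_\tau(x,y) \;=\; y^3+x^4+a_0\tau^{12}+a_2\tau^8 y+a_3\tau^6 x^2+a_5\tau^2 x^2 y,
\]
with the weighted ansatz $x_c=\xi\tau^3$, $y_c=\eta\tau^4$, $y_n=\rho\tau^4$. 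Imposing $F=F_x=F_y=0$ at each of the three singular points together with the cusp condition (vanishing Hessian determinant) at $(\pm x_c,y_c)$ gives an algebraic system in $(a_0,a_2,a_3,a_5,\xi,\eta,\rho)$ whose non-degenerate solution determines the orbit up to the $\C^*$-scaling. After a suitable normalisation (for instance setting $a_5=\pm1$) the system collapses, by elementary elimination, to a single quartic equation with a triple root at the degenerate point (where the cusps and the node collide into the $E_6$) and one simple root that yields the stratum.

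Since the weights $12,8,6,2$ of $\lambda_0,\lambda_2,\lambda_3,\lambda_5$ are all even, the polynomial $F_\tau$ has real coefficients for both $\tau\in\R$ and $\tau\in i\R$; these two loci are exchanged by the complex substitution $\tau\mapsto i\tau$, equivalently $x\mapsto ix$, which sends real cusps at $(\pm x_c,y_c)$ to a complex conjugate pair. Thus $\Sigma_\R(A_1,2A_2|E_6)$ has two smooth components meeting only at $\tau=0$. A direct Hessian computation at $(0,y_n)$ on one representative point of each arc then decides the real topological types: on one arc $x_c$ is real, the two cusps are real, and the Hessian at the node turns out to be indefinite, making it a hyperbolic node $A_1^h$; on the other arc $x_c^2<0$, the two cusps form a complex conjugate pair, and the Hessian at the node is definite, making it an elliptic node $A_1^e$. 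The main obstacle is the explicit solution of the polynomial system in the construction step, but once it is available both the $\C^*$-orbit description and the Hessian computation along the two real arcs are routine.
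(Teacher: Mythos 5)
Your computational route is viable in principle --- I checked that your ansatz does work out: normalizing $\tau=1$ and $a_5=s$, eliminating the cusp and node conditions reduces to the quartic $186624u^4+55296u^3+6048u^2+288u+5=0$ in $u=a_3/s^3$, which indeed has a triple root $u=-1/12$ (the degenerate case $x_c=0$) and a simple root $u=-5/108$; the latter gives $x_c^2=-s^3/54$ and Hessian determinant $2s^5/81$ at the node, so the signs of $s$ separate $\{A_1^h,\,2\ \text{real cusps}\}$ from $\{A_1^e,\,2\ \text{conjugate cusps}\}$ exactly as the lemma asserts. But there is a genuine gap in your part (1). The statement that ``the $\C^*$-orbit structure forces the stratum, if nonempty, to be a single smooth $1$-dimensional $\C^*$-orbit'' is not a valid deduction: a one-dimensional $\C^*$-invariant germ is only forced to be a finite union of orbit closures, each of the monomial form $\{\lambda_m=a_m t^{w_m}\}$, and such a curve need not be smooth at the origin (e.g.\ an orbit supported on the weight-$5$ and weight-$2$ coordinates is a $(2,5)$-cusp), nor need the union be irreducible. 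For the same reason, restricting to $\sigma$-invariant deformations with the node on the fixed axis is an ansatz that produces \emph{one} component of $\Sigma(A_1,2A_2|E_6)$; nothing in your argument excludes further components outside that subspace, and without irreducibility your identification of the \emph{two} real branches of $\Sigma_\R$ is incomplete. You also should note that vanishing of the Hessian only gives $A_{\ge2}$ at the two symmetric points; that they are exactly cusps and that no extra singular points appear follows from the bound $\sum\delta\le\delta(E_6)=3$ in any deformation, but this must be said.

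The paper closes precisely this gap by a structural argument: since $\varkappa(A_1)+2\varkappa(A_2)=8=\varkappa(E_6)$, the stratum $\overline\Sigma(A_1,2A_2|E_6)$ coincides with the equiclassical locus, which is smooth by Diaz's theorem, and its dimension is $\mu(E_6)-\mu(A_1)-2\mu(A_2)=1$; smoothness then gives irreducibility for free, after which a single explicit real family (the paper uses the dual of $y^3+(x^2-ty^2)^2=0$, whose bitangent and flexes dualize to the node and the cusps) suffices to identify both real branches. To repair your proof, either import such a smoothness/irreducibility statement (Diaz, or Lyashko's description of the adjacency strata in the discriminant of a simple singularity, already cited in the paper for Lemma 2.2), or carry out your elimination over the full six-dimensional base rather than the $\sigma$-invariant slice. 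As it stands, your part (2) is a correct and rather pleasant alternative to the paper's dual-curve construction, but part (1) rests on an unjustified step.
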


\begin{proof}
Note that $\overline\Sig(A_1,2A_2)$ is the equiclassical locus in $\Vers(C,z)$, since $$\begin{cases}\varkappa(A_1)+2\varkappa(A_2)=8=\varkappa(E_6),&\\
\delta(A_1)+2\delta(A_2)=3=\delta(E_6),&\end{cases}$$ (see details in \cite{DH}). Then, by \cite[Theorem 27]{Di}, $\overline\Sig(A_1,2A_2)$ is nonempty and smooth. Its dimension is $\mu(E_6)-\mu(A_1)-2\mu(A_2)=1$.

To recover the singularities of germs parameterized by $\Sig_\RR(A_1,2A_2)$, we only need to exhibit a small real deformation
\begin{equation}y^3+x^4+\sum_{3i+4j<12}a_{ij}(t)x^iy^j=0,\quad a_{ij}(0)=0,\ 0\le3i+4j<12,\quad t\in(\RR,0),\label{ebs1new}\end{equation}
of the quartic $y^3+x^4=0$, which possesses the singularity types as asserted in the lemma, one collection for $t>0$ and the other collection for $t<0$. To this end, consider the deformation
$$y^3+(x^2-ty^2)^2=0,\quad t\in(\RR,0).$$
These real quartics have singularity $E_6$ at the origin and the infinite line as the double tangent; furthermore,
\begin{itemize}\item for $t>0$: the double tangent touches the quartic at two real point, and the quartic has two real flexes,
\item for $t<0$: the double tangent touches the quartic at two complex conjugate points, and the quartic has two complex conjugate flexes.
\end{itemize}
Passing to the dual curves, we obtain the desired deformation (\ref{ebs1new}).
\end{proof}

\begin{lemma}\label{lbs3a}
Let $(C,z)$ be a plane curve singularity of type $E_6$. Then the locus $\overline\Sig(A_5)\subset \Vers(C,z)$ is one-dimensional and smooth. If $(C,z)$ is real, then one component of $\Sig_\RR(A_5)$ parameterises curves with real singularity $A_5^h$, while the other one - curves with real singularity $A_5^e$.
\end{lemma}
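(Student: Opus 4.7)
The plan is to follow the pattern set by Lemma \ref{lbs1}: exhibit an explicit real one-parameter deformation of the $E_6$ germ whose nonzero members have topological type $A_5$, and then distinguish the two real types $A_5^h,A_5^e$ via the Morse-reduced local form at the emerging singularity. I take the standard model $(C,p)=\{y^3+x^4=0\}$, whose miniversal base $B(C,p)\cong(\C^6,0)$ is spanned by the Tjurina basis $\{1,x,y,x^2,xy,x^2y\}$; the codimension bound $\dim\overline\Sigma(A_5|E_6)\le\tau(E_6)-\tau(A_5)=1$ is then immediate.

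Guided by the quasi-homogeneous grading $\wt(x)=3$, $\wt(y)=4$, $\wt(f)=12$---so that the Tjurina generator $x^2y$ carries the unique minimal deformation weight $2$---I propose the family
\[
F_s(x,y)=y^3+x^4+\tfrac{s^6}{864}-\tfrac{s^4}{48}\,y-\tfrac{s^3}{12}\,x^2+s\,x^2y,\qquad s\in(\R,0).
\]
A direct computation verifies $F_s=\partial_xF_s=\partial_yF_s=0$ at $p_s=(0,s^2/12)$, and the translation $y=s^2/12+z$ brings the local equation to $\tfrac{s^2}{4}z^2+z^3+s\,x^2z+x^4=0$. For $s\ne 0$ the coefficient of $z^2$ is nonzero, so Morse's lemma with parameter solves $\partial_z=0$ as $z(x)=-2x^2/s+O(x^4)$, and back-substitution yields a residual function of $x$ with leading term $-8x^6/s^3$. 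Hence $F_s$ is contact equivalent at $p_s$ to $\tfrac{s^2}{4}u^2-\tfrac{8}{s^3}x^6$, which is of type $A_5$. Since the coefficient of $x^2y$ in $F_s$ equals $s$, the map $s\mapsto F_s$ is a smooth embedding of $(\C,0)$ into $B(C,p)$, and the codimension bound identifies its image with $\overline\Sigma(A_5|E_6)$, giving smoothness and dimension one.

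For the real statement the same normal form $\tfrac{s^2}{4}u^2-\tfrac{8}{s^3}x^6$ settles everything: when $s>0$ the two coefficients have opposite signs, the two smooth branches $u=\pm c\,x^3$ are real, and the type is $A_5^h$; when $s<0$ both coefficients become positive, the branches are complex conjugate, and the type is $A_5^e$. The punctured real stratum $\Sigma_\R(A_5|E_6)$ consists of the two arcs $\{s>0\}$ and $\{s<0\}$, which are the two components claimed.

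The main point to verify carefully is that the explicit family exhausts the stratum rather than realizing only one component of a possibly reducible locus. I would handle this via the $\C^*$-action on $B(C,p)$: any one-dimensional component of $\overline\Sigma(A_5|E_6)$ through the origin must be quasi-homogeneous, and the only parameter weight admitting a regular parameterization is the weight $2$ of the unique minimal Tjurina generator $x^2y$; this forces at most one irreducible component, necessarily coinciding with our family.
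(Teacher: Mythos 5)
Your proof is correct and is essentially the paper's own argument: after the substitution $t=s/2$ your family $F_s$ coincides exactly with the one-parameter family the paper obtains (by eliminating the $y^2$-term from $(y+s)^3+(x^2+t(y+s))^2$), and the smoothness, the $A_5$ identification, and the sign analysis distinguishing $A_5^h$ from $A_5^e$ all match. Your concluding quasi-homogeneity remark on why the stratum has a single branch is somewhat heuristic (distinct $\C^*$-orbits of the same minimal weight are not a priori excluded), but the paper's proof addresses this point no more carefully.
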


\begin{proof}
Let $(C,z)=\{y^3+x^4=0\}$, $z=(0,0)$. Then $\Vers(C,z)$ is isomorphic to the germ at $0$ of the space
$\Span_\CC\{1,x,x^2,y,xy,x^2y\}$.
We can describe $\overline\Sig(A_5\vert E_6)$ as follows. Consider the (extended) versal deformation with the base
$$\Vers'(C,z)=\Span_\CC\{1,x,x^2,y,xy,x^2y,y^2\}.$$ The locus $\overline\Sig'(A_5)\subset \Vers'(C,p)$ is given by
$$\{(y+s)^3+x^4+2tx^2(y+s)+t^2(y+s)^2=0,\ s,t\in (\CC,0)\}$$
Substituting $y-s-t^2/3$ for $y$, we obtain the family
$$\overline\Sig(A_5)=\left\{y^3+x^4+tx^2y+\frac{t^4}{3}y-\frac{t^6}{27}-\frac{t^2}{3}x^3=0,\ t\in(\CC,0)\right\}
\subset\Vers(C,z),$$
which is one-dimensional and smooth, since it is regularly parameterized by $t$.
If $(C,z)$ is real, then a direct computation shows that, for $t>0$, the above family exhibits germs with singularity $A_5^h$, while for $t<0$, - germs with singularity $A_5^e$.
\end{proof}

\subsection{Wall-crossing: Transition of a singularity through a fixed point}\label{secbs9}
Let $V_d(S,S_1,...,S_r)$ ($r\ge0$) be a $T$-smooth equisingular family of dimension $n\ge3$. Let $C\in V_d(S,S_1,...,S_r)$ and let $p\in\Sing(C)$ be the singular point of type $S$. Suppose that the natural projection $\rho:(V_d(S,S_1,...,S_r),C)\to(\PP^2,p)$, taking a curve $C'$ in the germ of $V_d(S,S_1,...,S_r)$ at $C$ to the singular point $p'\in\Sing(C')\cap B(C,p)$ of type $S$, is a submersion. In such a case, the fiber $V_d(S^{fix},S_1,...,S_r):=\rho^{-1}(p)$ is $T$-smooth of dimension $n-2$. Introduce also the locus
$$V_{d,p}(S,S_1,...,S_r)=\{C'\in V_d(S,S_1,...,S_r)\ :\ p\in C'\}.$$
Its dimension equals $n-1$.

\begin{lemma}\label{lbs6}
Accepting the notation and the assumptions of the preceding paragraph, we additionally suppose that $S=A_2$.

(1) Then
$$(V_{d,p}(A_2,S_1,...,S_r),C)\simeq(V_d(A_2^{fix},S_1,...,S_r),C)\times(\{\alpha^2+\beta^3=0\},(0,0)).$$

(2) Suppose that $C$ and $p$ are real. Then, for any partition $n=n_{re}+2n_{im}$, $n_{re}>0$, there exists a configuration $\bp\in{\mathcal P}_{n_{re},n_{im}}$ of $n$ distinct points on $C$ and a path $$\bp(t)\in{\mathcal P}_{n_{re},n_{im}},\ -\eps<t<\eps,\quad\text{where}\quad\bp(0)=\bp,\quad p\in\bp(t)\ \text{for all}\ t,$$
such that
the intersection of the set
$V_d^\RR(S,S_1,...,S_r,\bp(t))$ with a small neighborhood of $C$ in $\vert{\mathcal O}_{\PP^2}(d)\vert$
consists of two elements which are real curves $\RR$-isosingular to $C$ as $0<t<\eps$, or is empty as $-\eps<t<0$.
\end{lemma}

\begin{proof}
(1) The geometric meaning of the first part of the lemma is that $V_{d,p}(S,S_1,...,S_r)$ has the singular locus  $V_d(S^{fix},S_1,...,S_r)$ of codimension one, and the transverse sections are curve germs with singularity $A_2$.

Pick $(n-2)$ smooth points $p_1,...,p_{n-2}\in C$ in general position. Then the linear system $\Lambda_d(p_1,...,p_{n-2})$ (defined as in (\ref{Lambda})) intersects transversally with the germ of $V_d(S^{fix},S_1,...,S_r)$ at $C$, and the intersection consists of the unique element $\{C\}$. Through each point $p_i$, $1\le i\le n-2$, draw a line $L_i\subset\PP^2$ transversally intersecting $C$, and choose a regular coordinate $t_i\in(\CC,0)$ on the germ $(L_i,p_i)$. Then the tuple $(t_1,...,t_{n-2})$ defines regular coordinates on $(V_d(S^{fix},S_1,...,S_r),C)$ via the isomorphism
$$(t_1,...,t_{n-2})\in(\CC^{n-2},0)\mapsto (V_d(S^{fix},S_1,...,S_r),C)\cap\Lambda_d(p_1(t_1),...,p_{n-2}(t_{n-2})).$$
Let $F(x,y,t_1,...,t_{n-2})=0$ be an affine equation of the corresponding element $C(t_1,...,t_{n-2})\in(V_d(S^{fix},S_1,...,S_r),C)$, where $p=(0,0)$. It is easy to see that the formula $F(x+\alpha,y+\beta,t_1,...,t_{n-2})=0$ describes all elements of the germ of $V_d(S,S_1,...,S_r)$ at $C$, and $(\alpha,\beta,t_1,...,t_{n-2}\in(\CC^n,0)$ are regular coordinates on the latter germ (see, for instance, \cite[Lemma 1]{Sh2}). In these coordinates, the intersection $V_{d,p}(S,S_1,...,S_r)\cap\Lambda(p_1,...,p_{n-2})$ is given by
\begin{equation}\beta^2+\alpha^3+\text{h.o.t.}=0.
\label{e-tran}\end{equation}
This proves the first statement of the lemma.

(2) The result of the preceding item implies that the germ of $V_{d,p}(S,S_1,...,S_r)$ at $C$ possesses the tangent cone which is the linear space of dimension $n-1=\dim V_{d,p}(S,S_1,...,S_r)$, namely, the direct sum of the tangent space $T_CV_d(S^{fix},S_1,...,S_r)$ and the tangent line to the cuspidal curve germ $V_{d,p}(S,S_1,...,S_r)\cap\Lambda_d(p_1,...,p_{n-2})$. Pick one more point $p_{n-1}\in C\setminus\{p_1,...,p_{n-2}\}$ in general position. We claim that the linear subsystem $\Lambda_d(p_1,...,p_{n-1})\subset\Lambda_d(p_1,...,p_{n-2})$ intersects $\TCon_CV_{d,p}(S,S_1,...,S_r)$ transversally. To this end, we shall verify that $\Lambda_d(p_1,...,p_{n-1})$ intersects $V_{d,p}(S,S_1,...,S_r)$ at $C$ with multiplicity $2$.
$$(y+\beta)^2+(x+\alpha)^3=0,\quad (\alpha,\beta)\in(\CC^2,0).$$
We can choose the point $p_{n-1}\in C$ very close to $p$ so that $_{n-1}=(x_0,y_0)$, where $y^2_0+x_0^3=0$. We can describe the germ of $V^\RR_{d,p}(S,S_1,...,S_r)$ at $C$ as the one-parameter family of curves
$$(y+\tau^3)^2+(x-\tau^2)^3=0,\quad\tau\in(\CC,0).$$
Slightly deforming the linear system $\Lambda_d(p,p_1,...,p_{n-1})$ by moving the point $p_{n-1}$ along the line $x=x_0$, we obtain the considered intersection in the form
$$(y+\tau^3)^2+(x_0-\tau^2)^3=0\quad\Longleftrightarrow\quad y^2+x_0^3=3x_0^2\tau^2+O(\tau^3)$$
which means that, for $y$ close to $y_0$, one obtains two simple roots for $\tau\in(\CC,0)$.

If $C$ and $p$ are real, the transversality of intersection proved in the preceding paragraph descents to the case of $\{p_1,...,p_{n-1}\}\in{\mathcal P}_{n_{re}-1,n_{im}}$ lying on $C$ in general position. Combining this with the result of item (1), we prove the existence of a path $\{p_1(t),...,p_{n-1}(t)\}\in{\mathcal P}_{n_{re}-1,n_{im}}$ realizing the wall-crossing in item (2i).
\end{proof}

\subsection{Wall-crossing: Non-transversality of the point constraints}\label{secbs_trans}

Here we answer the following question: Given a $T$-smooth variety $V^{irr}_d(S_1,...,S_r)$ of dimension $n$, a curve $C\in V^{irr}_d(S_1,...,S_r)$, and a configuration of $n$ distinct points $\bp\subset C\setminus\Sing(C)$, does the linear system $\Lambda_d(\bp)$ intersect transversally with the tangent space $T_CV^{irr}_d(S_1,...,S_n)$?

Our answer is: \begin{itemize}\item the transversality always holds when $S_1,...,S_n\in\{A_1,A_2\}$, \item the transversality fails whenever there is $S_i\not\in\{A_1,A_2\}$ and $\bp$ belongs to a certain hypersurface in $\Sym^n(C)$.\end{itemize}

\begin{lemma}\label{lbs-trans1}
Consider the locus $V^{irr}_d(kA_1,lA_2)$, where $k+l=\frac{(d-1)(d-2)}{2}$. Then

(1) The locus $V^{irr}_d(kA_1,lA_2)$ is non-empty iff $l\le\frac{3}{2}d-3$, and moreover, under the latter condition, it is $T$-smooth of dimension $n=\frac{d(d+3)}{2}-k-2l$.

(2) For any curve $C\in V^{irr}_d(kA_1,lA_2)$ and any collection of $n$ distinct points $p_1,...,p_n\in C\setminus\Sing(C)$, the tangent space $T_CV_d(kA_1,lA_2)$ intersects transversally with the linear system $\Lambda_d(p_1,...,p_n)$.
\end{lemma}

\begin{proof} We exclude the trivial cases of $d\le 2$ and assume that $d\ge3$.

{\bf(1)} The Pl\"ucker formula (intersection of the curve with its Hessian) yields $6k+8l\le3d(d-2)$. Together with our assumption $k+l==\frac{(d-1)(d-2)}{2}$ this implies the required upper bound
\begin{equation}l\le\frac{3}{2}d-3.\label{e-32d}\end{equation}
Since $\frac{3}{2}d-3<3d$, Lemma \ref{TS}(1) implies that the family $V^{irr}_d(kA_1,lA_2)$ is $T$-smooth of dimension $n$ or empty.

Let us show that the bound (\ref{e-32d}) is sharp. More precisely, for any $0\le l\le\frac{3}{2}d-3$, there exists a rational nodal-cuspidal curve of degree $d$ with $l$ cusps.
To start with, we observe that if there exists a rational nodal-cuspidal curve of degree $d$ with $l=\left[\frac{3}{2}d-3\right]$ cusps, then there exists a rational nodal-cuspidal curve of degree $d$ with any smaller number $l'<l$ of cusps. This follows from Lemmas \ref{TS} and \ref{TS2} which imply that one can convert prescribed cusps into nodes and keep the remaining nodes and cusps. The next observation is that, for an even degree $d=2m$, a rational nodal-cuspidal curve of degree $d$ with $\frac{3}{2}d-3=3m-3$ cusps is just the dual curve of a generic nodal rational curve of degree $m+1$, whereas, for an odd degree $d=2m+1\ge5$,
a rational nodal-cuspidal curve of degree $d$ with $\left[\frac{3}{2}d-3\right]=3m-2$ cusps is the dual curve to a rational curve of degree $m+2$ with nodes and one cusp. The latter curve can be viewed as a rational nodal curve with Newton polygon
$$\conv\{(0,2),(3,0),(m+2,0),(0,m+2)\},$$
see Lemma \ref{lbs7}.

{\bf(2)} 
As we notices in Section \ref{sec-sing}(3), the tangent space $T_CV^{irr}_d(kA_1,lA_2)$ can be identified with
$$\left\{h\in H^0(C,{\mathcal O}_C(d)\ :\ h(z)=0,\ z\in N\quad\text{and}\quad \ord h\big\vert_{(C,z)}\ge3,\ z\in K\right\},$$
where $N$ and $K$ are the sets of the nodes and cusps of $C$, respectively.
Then the tranversality asserted in the lemma can be restated in terms of the normalization $\nu:\PP^1\to C$ as follows:
$$H^0\left(\PP^1,\nu^*{\mathcal O}_C(d)\otimes{\mathcal O}_{\PP^1}(-\nu^{-1}(N)-3\nu^{-1}(K)-\sum_{i=1}^n\nu^{-1}(p_i))\right)=0.$$
Since
$$\deg\left(\nu^*{\mathcal O}_C(d)\otimes{\mathcal O}_{\PP^1}(-\nu^{-1}(N)-3\nu^{-1}(K)-\sum_{i=1}^n\nu^{-1}(p_i))\right)$$
$$=d^2-2k-3l-n=-1>-2,$$ the Riemann-Roch theorem yields the required $H^0$-vanishing.
\end{proof}

\begin{lemma}\label{lbs4}
Let 
$V^{irr}_d(S_1,...,S_r)$ 
be $T$-smooth, $n=\dim V^{irr}_d(S_1,...,S_r)$. Then the set
$$P:=\big\{\bp\in\Sym^n(\PP^2)\ :\ \dim V_d(S_1,...,S_r,\bp)>0\big\}$$
has codimension $\ge2$ in $\Sym^n(\PP^2)$.
\end{lemma}

\begin{proof}
Let $V\subset V^{irr}_d(S_1,...,S_r)$ be an irreducible component (of dimension $n$). Consider he incidence variety
$$Y=\{(C,\bp)\in V\times\Sym^n(\PP^2)\ :\ \bp\subset C\}.$$
The projection of $Y$ onto $V$ has irreducible $n$-dimensional fibers, and hence $Y$ is irreducible of dimension $2n$. On the other hand, over a Zariski open, dense subset of $\Sym^n(\PP^2)$, the projection $Y\to\Sym^n(\PP^2)$ is finite. Assuming that there exists a component $P'\subset P$ of dimension $2n-1$, we would obtain an additional (full-dimensional) component of $Y$ contrary to the irreducibility of $Y$.
\end{proof}

\begin{lemma}\label{lbs2}
For any topological singularity type $S$, the following relation holds
\begin{equation}c(S)\ge\varkappa(S)-\delta(S)\label{ebs2}\end{equation}
with the equality only when $S=A_1$ or $A_2$.
\end{lemma}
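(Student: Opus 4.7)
The plan is to translate the inequality into a combinatorial statement on the embedded resolution tree of $(C,p)$ and verify it by induction. I would first invoke the classical Plücker-type identity
$$\varkappa(S)=2\delta(S)+m(S)-r(S),$$
where $m(S)$ denotes the multiplicity and $r(S)$ the number of local branches of $(C,p)$; this is the standard expression behind the class formula and is consistent with the values used in the paper (for instance $\varkappa(E_6)=8=\varkappa(A_1)+2\varkappa(A_2)$ in Lemma \ref{lbs3}). The target inequality is then equivalent to
$$c(S)\ge \delta(S)+m(S)-r(S).$$

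Next, I would combine the well-known resolution-tree formula $\delta(S)=\sum_{q}\binom{m_q}{2}$, in which $q$ runs through all infinitely near points in the embedded resolution of $(C,p)$ and $m_q$ is the multiplicity of the strict transform at $q$, with the analogous resolution-tree expression for $c(S)$ advertised in the footnote and worked out in \cite[Section 2.2.3.1]{GLS}. Both sides of the desired inequality then become explicit sums indexed by the same tree, and one reduces to a purely combinatorial comparison. I would attack it by induction on the depth of the embedded resolution. In the base case of an ordinary $m$-fold point one has $\delta+m-r=\binom{m}{2}$ on the nose, while the codimension formula gives $c=\binom{m}{2}$ plus a strictly positive contribution as soon as $m\ge 3$ (from cross-ratios of tangent lines, modality of non-simple multiple points, or equivalently from higher-level infinitely near points needed for embedded resolution). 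Hence equality forces $m=2$, i.e.\ $S=A_1$. The inductive step performs a single blow-up at $p$: the invariants $\delta$, $m$, $r$, and $c$ split additively over the strict transform at the infinitely near points, and a straightforward comparison shows that the inequality picks up at least one extra unit of slack for every infinitely near singular point beyond the first, except in the unique chain $m_p=m_{p'}=2$ that encodes the cusp, recovering $S=A_2$.

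The main obstacle I foresee is the bookkeeping in the resolution-tree formula for $c(S)$: the correction terms distinguishing free and satellite branches, and absorbing modality of non-ordinary infinitely near points, each have to be checked to contribute nonnegatively to the slack, and to contribute strictly as soon as the tree departs from the two exceptional shapes yielding $A_1$ and $A_2$. Once these corrections are shown uniformly to produce a nonnegative excess with equality only in the two listed configurations, the inequality and its equality analysis follow.
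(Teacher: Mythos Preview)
Your strategy is sound and, once the resolution-tree bookkeeping is carried out, would give a correct proof; but it is genuinely different from the paper's argument. The paper does not touch the resolution tree at all. Instead it works ideal-theoretically: for an arbitrary germ $(C,p)$ with branches $C_1,\dots,C_s$ it invokes the containment $T_0ES(C,p)\subset I'_p$, where $I'_p\subset\mathcal{O}_{C,p}$ is the ideal defined by branchwise order conditions $\ord_{C_i}\varphi\ge 2\delta(C_i)+\sum_{j\ne i}(C_i\cdot C_j)_p+\mt(C_i,p)-1$ (quoted from \cite{GuS}), together with the known bound $\dim\mathcal{O}_{C,p}/I'_p\ge\varkappa-\delta$; this yields $c(S)\ge\varkappa(S)-\delta(S)$ in one line. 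For the equality analysis the paper passes to the smaller ideal $I''_p$ associated with the fixed-point stratum $ES^{fix}$, obtaining $c(S)+2\ge\varkappa(S)+\mt(S)-\delta(S)$, which forces $\mt(S)=2$ and hence $S=A_k$; a direct check then rules out $k\ge 3$.

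The trade-off: the paper's route is short because the hard work is already packaged in the cited inequalities from \cite{GuS} and \cite{DH}; your route is self-contained in principle but requires writing down the explicit resolution-tree formula for $c(S)$ from \cite[Section 2.2.3.1]{GLS} and tracking the free/satellite correction terms through an induction, exactly the obstacle you flagged. One small caution on your base case: for an ordinary $m$-fold point the excess $c-\binom{m}{2}=m-2$ does \emph{not} come from cross-ratios entering the topological type (they do not), nor from further infinitely near points (the strict transform is smooth after one blow-up); it comes simply from the computation $c=\binom{m+1}{2}-2$, so your heuristic there should be tightened before the induction is launched.
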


\begin{proof}
For $A_1$ and $A_2$ the equality in (\ref{ebs2}) follows from the formulas in Section \ref{sec-sing}(2,3). For an arbitrary plane curve germ $(C,z)=\{f(x,y)=0\}$ with an isolated singular point $z$, denote by $C_1,...,C_s$ all local branches.
The tangent space to the equisingular locus $ES(C,z)$ is contained in the ideal (see \cite[Inequality (19)]{GuS})
\begin{equation}I'_z=\left\{h\in\cO_{C,z}/J(f)\ \Big\vert\ \begin{array}{lcr} \ord h\big\vert_{C_i}\ge2\delta(C_i)+\sum_{j\ne i}(C_i\cdot C_j)_z+\mt(C_i,z)-1\\ \qquad\text{for all}\ i=1,...,s\end{array}\right\}.\label{e-array1}\end{equation}
Since $\dim\cO_{C,z}/I'_z\ge\varkappa(C,z)-\delta(C,z)$ (see \cite[Lemma 10]{GuS}), the inequality (\ref{ebs2}) follows. On the other hand, the tangent space to the locus $ES^{fix}(C,z)\subset ES(C,z)$, which parameterizes curve germs with singularity at $z$, is contained in the ideal (see \cite[Inequality (20)]{GuS})
$$I''_z=\left\{h\in\cO_{C,z}/J(f)\ \Big\vert\ \begin{array}{lcr}\ord h\big\vert_{C_i}\ge2\delta(C_i)+\sum_{j\ne i}(C_i\cdot C_j)_z+2\mt(C_i,p)-1\\ \qquad\text{for all}\ i=1,...,s\end{array}\right\},$$
while $\dim\cO_{C,z}/I''_z=\varkappa(C,z)+\mt(C,z)-\delta(C,z)$ (see \cite[Theorem 3 and Lemma 10]{GuS}). Thus, $c(C,z)+2\ge\varkappa(C,z)+\mt(C,z)-\delta(C,z)$ with a possible equality only if $\mt(C,z)=2$, i.e., when $(C,z)$ is of type $A_k$. However, in such a case, $c(A_k)=k$, $\varkappa(A_k)=k+1$, $\delta(A_k)=\left[\frac{k+1}{2}\right]$, and hence $c(A_k)>\varkappa(A_k)-\delta(A_k)$ for all $k\ge3$.
\end{proof}

\begin{lemma}\label{lbs8}
Let $V^{irr}_d(S_1,...,S_r)$ be $T$-smooth of dimension $n$
and let
\begin{itemize}\item $\sum_{i=1}^r\delta(S_i)=\frac{(d-1)(d-2)}{2}$ (i.e., all curves in the family $V_d(S_1,...,S_r)$ are rational),
\item the sequence $S_1,...,S_r$ contains a singularity type different from $A_1$ and $A_2$.
\end{itemize} Then \begin{enumerate}\item[(i)]
There exists a hypersurface $N_n(d,S_1,...,S_r)\subset\Sym^n(\PP^2)$ parameterizing configurations $\bp$ of $n$ distinct points in $\PP^2$
    such that \begin{itemize}\item $V^{irr}_d(S_1,...,S_r,\bp)\ne\emptyset$,
    \item for some curve $C\in V^{irr}_d(S_1,...,S_r,\bp)$,
    $$h^0(C,{\mathcal J}_{Z^{es}/C}(d)\otimes\cO_C(-\bp))>0.$$
\end{itemize}
\item[(ii)] There exists a non-empty Zariski open subset $N_n^0(d,S_1,...,S_r)\subset N_n(d,S_1,...,S_r)$ such that
\begin{itemize}\item $N_n^0(d,S_1,...,S_r)\cap {\mathcal P}_{n_{re},n_{im}}\ne\emptyset$ for all partitions $n_{re}+2n_{im}=n$ with $n_{re}>0$;
\item for each $\bp\in N_n^0(d,S_1,...,S_r)$, there exists
$C\in V^{irr}_d(S_1,...,S_r,\bp)$ such that the germ of $V^{irr}_d(S_1,...,S_r)$ at $C$ intersects with the linear system $\Lambda_d(\bp)$ at $C$ with multiplicity $2$, whereas any other germ of $V^{irr}_d(S_1,...,S_r)$ at $C'\in V^{irr}_d(S_1,...,S_r,\bp)\setminus\{C\}$, intersects with $\Lambda_d(\bp)$ transversally, i.e.,
$$h^0(C',{\mathcal J}_{Z^{es}(C')/C'}(d)\otimes\cO_{C'}(-\bp))=0.$$
\end{itemize}
\end{enumerate}
\end{lemma}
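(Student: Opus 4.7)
The plan is to identify $N_n$ with the discriminant of a natural branched cover over $\Sym^n(\PP^2)$.

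\emph{Setup.} I consider the incidence variety
$$\mathcal{I}=\{(C,\bz)\in V_d^{irr}(S_1,\dots,S_r)\times\Sym^n(\PP^2)\ :\ \bz\subset C\setminus\Sing(C)\}$$
together with its two projections. By $T$-smoothness $\dim V_d^{irr}(S_1,\dots,S_r)=n$, and the first projection is locally a $\Sym^n(C_{\mathrm{sm}})$-bundle of relative dimension $n$; hence $\dim\mathcal{I}=2n=\dim\Sym^n(\PP^2)$. Lemma \ref{lbs4} shows that the locus of positive-dimensional fibers of the second projection $\pi\colon\mathcal{I}\to\Sym^n(\PP^2)$ has codimension $\ge 2$, so $\pi$ is dominant and generically finite. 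Under the identification $T_CV_d^{irr}(S_1,\dots,S_r)\cong H^0(C,\mathcal{J}_{Z^{es}(C)/C}(d))$ coming from $T$-smoothness, the evaluation-at-$\bz$ sequence shows that $d\pi_{(C,\bz)}$ fails to be an isomorphism exactly when $h^1(C,\mathcal{J}_{Z^{es}(C)/C}(d)\otimes\mathcal{O}_C(-\bz))>0$.

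\emph{Part (i).} I define $N_n(d,S_1,\dots,S_r)$ as the closure of the $\pi$-image of the ramification locus $R\subset\mathcal{I}$. Since $R$ is locally cut out by a Jacobian determinant between smooth varieties of equal dimension, it has pure codimension $\le 1$ in $\mathcal{I}$, and so does $N_n$ in $\Sym^n(\PP^2)$; it remains to check $R\ne\emptyset$. This is where the non-nodal/cuspidal hypothesis enters through Lemma \ref{lbs2}: the B\'ezout/intersection argument of Lemma \ref{lbs-trans1}(ii) — which previously precluded an auxiliary curve $C'\ne C$ of degree $d$ containing $Z^{es}(C)\cap C$ together with enough further intersection points with $C$ — now leaves the needed room. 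I would exhibit such a $C'$ explicitly, take $\bz$ to be $n$ generic transverse intersection points of $C\cap C'$ lying in $C\setminus\Sing(C)$, and read off $h^0(C,\mathcal{J}_{Z^{es}(C)/C}(d)\otimes\mathcal{O}_C(-\bz))>0$, placing $\bz$ in $N_n$.

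\emph{Part (ii).} For $N_n^0$, I take the Zariski open subset of the smooth locus of $N_n$ cut out by two further open conditions: (a) the fiber $\pi^{-1}(\bz)$ contains exactly one ramified point, and (b) the ramification there is simple, i.e., $d\pi$ has corank $1$ and local degree $2$. Both hold at the generic point of each irreducible component of $N_n$ by standard facts on generically finite morphisms, and (b) translates precisely into the asserted intersection-multiplicity-$2$ contact between $V_d^{irr}(S_1,\dots,S_r)$ and $\Lambda_d(\bz)$ at $C$, with transversality elsewhere in the fiber. The whole construction is Galois-equivariant, so $N_n^0$ is defined over $\R$. To see $N_n^0(\R)\cap\mathcal{R}_{n_{re},n_{im}}\ne\emptyset$ for every pair with $n_{re}>0$ and $n_{re}+2n_{im}=n$, I start from any real configuration in $N_n^0(\R)$ and perform a local real-analytic move that either splits a real pair of points into a complex-conjugate pair or coalesces a conjugate pair at a real point and re-splits it as two reals; the smoothness of $N_n^0$ and the openness of (b) guarantee that such moves stay inside $N_n^0$, while the hypothesis $n_{re}>0$ furnishes the real point needed to initiate the move.

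The principal obstacle is the non-emptiness of the ramification locus $R$: Lemma \ref{lbs2} removes the B\'ezout barrier, but one must still construct the auxiliary curve $C'$ — most naturally via an incidence/parameter-space argument on pairs of degree-$d$ curves sharing the singular scheme of a given $C\in V_d^{irr}(S_1,\dots,S_r)$. A secondary difficulty, specifically in the real statement, is reaching \emph{every} stratum $\mathcal{R}_{n_{re},n_{im}}$; the hypothesis $n_{re}>0$ in the ensuing application (Theorem \ref{tbs3new}) appears exactly because the stratum-crossing move above requires at least one real point to act upon.
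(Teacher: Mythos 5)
Your incidence-variety framing is essentially the paper's own setup in different clothing (the paper builds a hypersurface $\Pi$ in the total space of $\Sym^n{\mathcal C}\to V_d^{irr}(S_1,...,S_r)$ and pushes it to $\Sym^n(\PP^2)$ via Lemma \ref{lbs4}), but the two points you defer are exactly the ones carrying the mathematical content, and neither is resolved. For the non-emptiness of the ramification locus you say you ``would exhibit such a $C'$ explicitly''; the paper does this not by an ad hoc construction but by a cohomological count: replacing $Z^{es}(C)$ by the larger scheme $Z'(C)$ cut out by the ideals $I'_p$ of (\ref{e-array1}), one gets $h^1(C,{\mathcal J}_{Z'(C)/C}(d))=0$ from $T$-smoothness, hence by Riemann--Roch every nonzero section of $H^0(C,{\mathcal J}_{Z^{es}(C)/C}(d))$ vanishes, besides the singular scheme, on a divisor $\Delta_{n_0}$ of degree $n_0=d^2-\sum_i\varkappa(S_i)$, and Lemma \ref{lbs2} gives precisely $n\le n_0$ because some $S_i$ is neither $A_1$ nor $A_2$. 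Any $n$ of the points of $\Delta_{n_0}$ then form a configuration with $h^1>0$. This is the only place the hypothesis on the singularities is used, and without this count your locus $R$ could well be empty --- it \emph{is} empty in the nodal-cuspidal case by Lemma \ref{lbs-trans1}(ii).

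The second gap is in part (ii). Simple ramification (corank $1$, local degree $2$) at the generic point of a component of the branch divisor of a generically finite map between smooth varieties is \emph{not} a standard fact: the map $(x,y)\mapsto(x^3,y)$ is generically finite with ramification divisor $2\{x=0\}$ and local degree $3$ along it. The paper has to prove the quadratic tangency by hand --- it releases one point $q\in\bz$, forms the one-parameter family $F$ and its envelope $E(C,\bz,q)$, and shows by an order-of-vanishing count that the curves of $F$ have contact of order exactly $2$ with $E$; likewise, the claim that all \emph{other} curves in the fibre are unramified requires the perturbation argument comparing $E(C,\bz,q)$ with $E(C',\bz,q)$, which is where the bound on $n$ enters. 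Finally, for the real statement, ``Galois-equivariant, hence defined over $\R$'' does not produce a real point of $N_n^0$, let alone one in a prescribed stratum ${\mathcal R}_{n_{re},n_{im}}$; and your coalesce-and-resplit move must cross the diagonal of $\Sym^n(\PP^2)$ while staying on the hypersurface, which you do not justify. The paper instead constructs such configurations directly: take a generic real curve $C$ in the family, a generic conjugation-invariant configuration of $n-1$ points on it, and complete it by a real point of $C$ to a configuration lying on $\Pi$; this is also precisely where the hypothesis $n_{re}>0$ originates.
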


\begin{proof} Observe that $d\ge4$ since a rational cubic has either one node or one cusp. Note also that $n\ge4$.  Indeed, any curve $C\in V^{irr}_d(S_1,...,S_r)$ can be included into one-parameter deformation of the union of $d$ distinct lines through one point (denote by $S$ the topological type of the latter singular point). It follows that
$$c(S_1)+...+c(S_r)<c(S)=\frac{d(d+1)}{2}-2\quad\Longrightarrow\quad n>\frac{d(d+3)}{2}-\frac{d(d+1)}{2}+2=d+2.$$

{\bf(i)} Pick a curve $C\in V^{irr}_d(S_1,...,S_r)$. For each singular point $z\in\Sing(C)$ consider the zero-dimensional scheme $Z'_z\subset C$ concentrated at $z$ and defined by the ideal $I'_z\subset\cO_{C,z}$ (see the proof of Lemma \ref{lbs2}). As noticed in the proof of Lemma \ref{lbs2}, $I'_z\supset I^{es}(C,z)$ (see \cite[Inequality (19)]{GuS} and \cite[Page 435, item (ii)]{DH}). The family $V^{irr}_d(S_1,...,S_r)$ is $T$-smooth, that is (see \cite[Theorem 2.2.55]{GLS}),
$$h^1(C,{\mathcal J}_{Z^{es}(C)/C}(d))=0,$$ and hence
\begin{equation}h^1(C,{\mathcal J}_{Z'(C)/C}(d))=0,\label{eh1-1}\end{equation} where the zero-dimensional scheme $Z'(C)$ is supported at $\Sing(C)$ and defined by the ideals $I'_z$, $z\in\Sing(C)$. Since $I'_z$ is contained in the conductor ideal $I^{cond}(C,z)$ (see, for instance, \cite[Section 1.1.2.6]{GLS}), we get ${\mathcal J}_{Z'(C)/C}(d)=\pi_*\cO_{\PP^1}(\Delta)$, where $\pi:\PP^1\to C$ is the normalization and
$$\deg\Delta=d^2-(d-1)(d-2)-\sum_{i=1}^r(\varkappa(S_i)-\delta(S_i))=\frac{d(d+3)}{2}-1-
\sum_{i=1}^r(\varkappa(S_i)-\delta(S_i)).$$
By Riemann-Roch and (\ref{eh1-1})
$$h^0(C,{\mathcal J}_{Z'(C)/C}(d))=\frac{d(d+3)}{2}-\sum_{i=1}^r(\varkappa(S_i)-\delta(S_i))\overset{\text{def}}{=}n_0+1.$$
As shown in the proof of Lemma \ref{lbs2},
$$H^0(C,{\mathcal J}_{Z^{es}(C)/C}(d))\subset H^0(C,{\mathcal J}_{Z'(C)/C}(d)),$$
and
$$n\overset{\text{def}}{=}h^0(C,{\mathcal J}_{Z^{es}(C)/C}(d))=\frac{d(d+3)}{2}-\sum_{i=1}^rc(S_i)\le n_0.$$

Note that
$$n_0=d^2-\sum_{i=1}^r\varkappa(S_i).$$ It means that every non-trivial section $h\in H^0(C,{\mathcal J}_{Z'(C)/C}(d))$ vanishes with multiplicity $\varkappa(C,z)$ at each singular point $z\in\Sing(C)$ (more precisely, vanishes with multiplicity $2\delta(Q)+\sum_{Q'\ne Q}(Q\cdot Q')_p+\mt(Q,p)-1$ on each local branch $Q$ of the germ $(C,z)$ (cf. (\ref{e-array1})) and vanishes at an extra divisor $\Delta_{n_0}$ of degree $n_0$. Note that, for almost all sections, $\Delta_{n_0}$ consists of $n_0$ distinct points disjoint from $\Sing(C)$: to this end, it is enough to exhibit just one section like that, and we can take the section represented by the product of a generic first derivative of the polynomial defining $C$ and a generic linear form. Thus, we obtain a morphism
$$\vert {\mathcal J}_{Z'(C)/C}(d)\vert \to\Sym^{n_0}(C).$$
It follows from B\'ezout's theorem, that this morphism is (generically) injective, and hence its image defines a hypersurface in $\Sym^{n_0}(C)$. As a consequence, the multivalued morphism
$$\vert {\mathcal J}_{Z^{es}(C)/C}(d)\vert \to\Sym^n(C),$$ which takes a section of $H^0(C,{\mathcal J}_{Z'(C)/C}(d))$ to the bunch of subdivisors of $\Delta_{n_0}$ of degree $n$, has an image of dimension $n-1$.
This, we obtain a hypersurface $\Pi$ in $\Sym^n{\mathcal C}$, where ${\mathcal C}\to V^{irr}_d(S_1,...S_r)$ is the universal curve. By Lemma \ref{lbs4}, the natural projection $\Pi\to\Sym^n(\PP^2)$ restricted to an open dense subset of $\Pi$ is a finite-to-one morphism; hence, the image is a hypersurface $N_n(d,S_1,...,S_r)\subset\Sym^n(\PP^2)$.

{\bf(ii)} Now, pick a generic real curve $C\in V^{irr,\RR}_d(S_1,...,S_r)$ and a generic conjugation-invariant configuration of $n-1$ points on $C$. Then this configuration can be completed with a real point of $C$ to an element of $\Pi$. This proves the first statement in item (ii) of the lemma.

To prove the second statement in item (ii) of the lemma, take a generic $C\in V^{irr}_d(S_1,...,S_r)$ and a generic section $h\in H^0(C,{\mathcal J}_{Z^{es}(C)/C}(d))$ which vanishes at each singular point $z\in\Sing(C)$ with (the total) multiplicity $\varkappa(C,z)$ as well as in $n_0$ distinct smooth points. Pick a subset $\bp$ of $n$ points among the latter $n_0$ points, and choose a point $p\in\bp$. Varying $h\in H^0(C,{\mathcal J}_{Z^{es}(C)/C}(d))$, we can suppose that $p$ is not a flex. Removing the point $p$ from the constraints, we obtain a germ at $C$ of a one-dimensional equisingular family $F$ of curves passing through $\bp\setminus\{p\}$. Consider the enveloping curve $E(C,\bp,p)$ of this family in a neighborhood of $p$. By the generality of choices made, $E$ is smooth and, moreover, $E$ contains $p$, since
\begin{align}T_CF&=H^0(C,{\mathcal J}_{Z^{es}(C)/C}(d)\otimes\cO_C(-(\bp\setminus\{p\})))\nonumber\\
&=H^0(C,{\mathcal J}_{Z^{es}(C)/C}(d)\otimes\cO_C(-\bp)).\label{e-envelope}\end{align}
We claim that any curve $C'\in F$ intersects with $C$ in a neighborhood of $p$ transversally at one point. Indeed, due to the generality of the choices made, a non-zero element of $T_CF=H^0(C,{\mathcal J}_{Z^{es}(C)/C}(d)\otimes\cO_C(-\bp))$ vanishes with the total order $\sum_{i=1}^r\varkappa(S_i)$ at $\Sing(C)$ as well as at $\bp$ and at some extra $n_0-n$ points with order $1$, and hence the claim follows. Furthermore, this claim yields that the curves of $F$ are quadratically tangent to $E$ in a neighborhood of $p$. Thus, slightly moving $p$ in a direction transversal to $E$ to another point $p'$, we obtain exactly two curves $C',C''\in F$ passing through $p'$ and through $\bp\setminus\{p'\}$, which proves that the intersection multiplicity of the germ $(V^{irr}_d(S_1,...,S_r),C)$ with the linear system $\Lambda_d(\bp)$ equals $2$.

At last, we prove the transversality of the intersection of the germ $(V^{irr}_d(S_1,...,S_r),C')$ with $\Lambda_d(\bp)$ for each curve $C'\in V^{irr}_d(S_1,...,S_r),\bp)\setminus\{C\}$. We show this by a suitable small deformation of the pair $(C,\bp)$. Indeed, assume that $C'\in V^{irr}_d(S_1,...,S_r)\setminus\{C\}$ is such that $\bp$ consists of zeros of some non-trivial section $g'\in H^0(C',{\mathcal J}_{Z^{es}(C')/C'}(d))$. Now we release the point $p\in\bp$ while keeping $\bp\setminus\{p\}$ fixed, and obtain the germ $F$ at $C$ of the (global) one-dimensional family $V^{irr}_d(S_1,...,S_r,\bp\setminus\{p\})$ and its enveloping curve $E(C,\bp,p)$ as well as the germ $F'$ at $C'$ of the same equisingular family $V^{irr}_d(S_1,...,S_r,\bp\setminus\{p\})$ and its enveloping curve $E(C',\bp,p)$.
Using relation (\ref{e-envelope}), we make the following observation: if $\widetilde C\in F$ and a point $\widetilde p$ in a neighborhood of $p$ is such that $$h^0(\widetilde C,{\mathcal J}_{Z^{es}(\widetilde C)/\widetilde C}(d)\otimes\cO_{\widetilde C}(-\bp+p-\widetilde p))>0,$$ then $\widetilde p$ is the tangency point of $\widetilde C$ and $E(C,\bp,p)$. The same holds for the family $F'$. Thus, if we move $C$ along $F$ and $p$ along $E(C,\bp,p)$, then $p$ leaves the enveloping curve $E(C',\bp,p)\ne E(C,\bp,p)$, and $C'$ turns into a curve $\widetilde C'$ for which $$h^0(\widetilde C',{\mathcal J}_{Z^{es}(\widetilde C')/\widetilde C'}(d)\otimes\cO_{\widetilde C}(-\bp+p-\widetilde p))=0,$$ as required.

It remains to explain that $E(C',\bp,p)\ne E(C,\bp,p)$ in the preceding paragraph. Clearly, the inequality holds if $C$ and $C'$ have distinct tangents at $p$. We claim that the relation $T_qC=T_qC'$ holds in at most one point of $\bp$: indeed, due to $n\ge4$, the position and the tangents to $C$ at two points can be chosen in a generic way; since the remaining $n-2$ points of $\bp$ vary in an $(n-3)$-dimensional family, a curve $C'$ passing through $\bp$ and tangent to $C$ at two points would vary in an $(n+1)$-dimensional family contrary to the upper bound $n$. That is, we always can choose a point $p\in\bp$ at which $T_pC\ne T_pC'$.
\end{proof}

\begin{remark}
The use of the enveloping curve in the end of the proof of Lemma \ref{lbs8} resembles the proof of \cite[Theorem 3.1]{IKS0} stating the lack of invariance of Welschinger's signed count of real plane curves of positive genera.
\end{remark}

\section{Enumeration of real cuspidal cubics}
\label{secbs2}

The general Welschinger result \cite[Theorem 0.1]{Wel1} implies that the enumeration of real plane cuspidal cubics passing through a generic conjugation-invariant configuration of $7$ points is not invariant. Here, we comment on the two wall-crossing events, in which the number of real cuspidal cubics jumps or drops by $2$.

\medskip{\bf(a)} One of these events is the transition through the wall that parameterises configurations of $7$ points through which one can trace a real cubic with $A_3$ singularity. Such a cubic $C$ is the union of a line $L$ and a conic $Q$ tangent to the line at some point $z$. We consider a particular place on this wall when $5$ generic points stay on the conic $Q$ and $2$ points lie on $L\setminus\{z\}$. It is easy to verify the hypotheses of Lemma \ref{local-global}. Hence, by Lemma \ref{lbs1}(2,4), two real cuspidal cubics appear or disappear (see Figure \ref{fbs1}), that breaks the invariance of the count.

\begin{figure}
\setlength{\unitlength}{1cm}
\begin{picture}(14,4)(-1.3,-0.5)
\includegraphics[width=0.8\textwidth, angle=0]{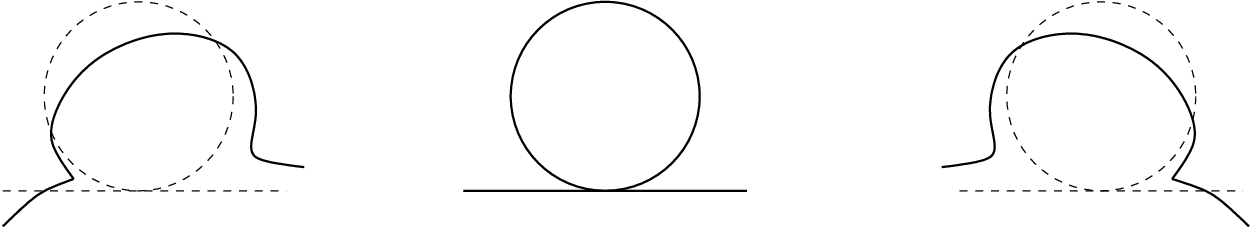}
\put(-6.9,0.9){$\Longleftarrow$}\put(-3.3,0.9){$\Longrightarrow$}
\end{picture}
\caption{Deformation $A_3\Longrightarrow A_2$ (shown by solid line): both real deformations are on the same side of the bifurcation} \label{fbs1}
\end{figure}

\medskip{\bf(b)} The other event is the crossing of the wall that parameterises configurations of $7$ points, one of which appears to be at the cusp. So, let $C$ be a real cuspidal cubic, $z_1$ be the cuspidal singular point, and $z_2,...,z_7$ a generic configuration of $6$ real points on $C$. Then, by Lemma \ref{lbs6}(2i), there exists a path
in ${\mathcal P}_{7,0}$ (with fixed $z_1$) along which two counted real cuspidal cubics appear or disappear breaking the invariance of the count.

\section{Failure of invariance, I: curves with a singularity different from node or cusp}
\label{secbs8new}

\begin{theorem}\label{tbs3new}
Let $V^{irr}_d(S_1,...,S_r)$ be a $T$-smooth locus of dimension $n=n_{re}+2n_{im}$ such that
\begin{itemize}\item $d\ge4$, \item $\sum_{i=1}^r\delta(S_i)=\frac{1}{2}(d-1)(d-2)$ (i.e., all curves $C\in V_d(S_1,...,S_r)$ are rational), \item $n_{re}>0$,
\item the sequence $S_1,...,S_r$ contains a singularity type different from $A_1$ and $A_2$.
    \end{itemize} Then there is no real singularity weight $\varphi:\RR TT\to R\setminus\{0\}$ such that the number
$$\sum_{C\in V^{irre,\RR}_d(S_1,...,S_r;\bp)}w_\varphi(C)$$
is invariant of the choice of a generic configuration $\bp\in{\mathcal P}_{n_{re},n_{im}}$.
\end{theorem}

\begin{proof}We accept the notations in Lemma \ref{lbs8}.

Consider a curve $C\in V_d^{irr,\RR}(S_1,...,S_r)$ and a configuration $\bp\subset C\setminus\Sing(C)$ of $n$ distinct points belonging to $$N^0_n(d,S_1,...,S_r)\cap{\mathcal P}_{n_{re},n_{im}}$$ (which is nonempty by Lemma \ref{lbs8}(ii)). Let $p\in\bp$ be a real point. Let $F$ be the germ of the one-dimensional family of real curves, and let $E$ be its enveloping curve as introduced in the proof of Lemma \ref{lbs8}. Take the germ of a smooth curve $M$ in $\RR P^2$ transversally crossing the enveloping curve $E$ at $p$ and define a path in ${\mathcal P}_{n_{re},n_{im}}$ consisting of the fixed part $\bp\setminus\{p\}$ and a mobile point $p(t)$ moving along $M$ so that $p(0)=p$. Thus, when $t$ changes sign, two real $\RR$-isosingular curves in the family $F$ turn into two complex conjugate curves, see Figure \ref{fbs4}. Hence, the invariance of the count fails in view of the fact that this wall-crossing event does not interfere with any other one (see Lemma \ref{lbs8}(ii)).
\end{proof}

\begin{figure}
\setlength{\unitlength}{1cm}
\begin{picture}(5,5)(-2,0)
\includegraphics[width=0.6\textwidth, angle=0]{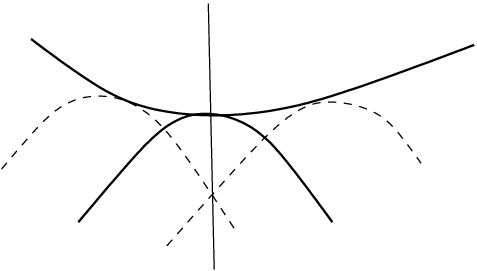}
\put(-7,3.5){$E$}\put(-2,0.5){$C$}
\put(-4.3,2.5){$p$}\put(-4.3,4.1){$M$}
\end{picture}
\caption{Wall-crossing in Theorem \ref{tbs3new}} \label{fbs4}
\end{figure}

\begin{remark}\label{r-fail1}
The conclusion of Theorem \ref{tbs3new} holds also when $n_{re}=0$, provided, $N_n^0(d,S_1,...,S_r)\cap {\mathcal P}_{0,n_{im}}\ne\emptyset$ (cf. Lemma \ref{lbs8}).
Indeed, let $\bp\in N_n^0(d,S_1,...,S_r)\cap {\mathcal P}_{0,n_{im}}$, and let $C\in V_r^{irr,\RR}(S_1,...,S_n,\bp)$ be such that $V_r^{irr}(S_1,...,S_n,\bp)$ intersects with $\Lambda_d(\bp)$ at $C$ with multiplicity $2$. Taking into account that $V_d^{irr,\RR}(S_1,...,S_r)$ is smooth at $C$, we derive that a variation of $\bp in{\mathcal P}_{0,n_{im}}$ along a generic smooth path yields a deformation of the double intersection at $C$ into two real $\RR$-isosingular curves on one side, and into a pair of complex conjugate curves on the other side.

In fact, this argument works well for $n_{re}>0$. We, however, presented another proof of Theorem \ref{tbs3new} which admits a simple geometric visualization.
\end{remark}

\section{Failure of invariance, II: quartics}\label{secbs11}

Recall that according to \cite[Table 1]{Wal} and \cite[Theorem 2]{Sh1} (see also Lemma \ref{TS1}(1) and \cite[Theorem 4.3.8]{GLS}),
we have:

\begin{lemma}\label{rbs2}
A family $V_4^{irr}(S_1,...,S_r)$ of rational non-nodal quartics is nonempty iff the singularity collection $(S_1,...,S_r)$ belongs to the following list:
\begin{equation}\begin{cases}&(2A_1,A_2),\ (A_1,2A_2),\ (A_1,A_3),\ (A_1,A_4),\ (3A_2),\\ &(A_2,A_3),\ (A_2,A_4),\ (A_5),\ (A_6),\ (D_4),\ (D_5),\ (E_6).\end{cases}\label{e-quart}\end{equation}
Furthermore, any nonempty locus $V_4(S_1,...,S_r)$ is $T$-smooth of dimension $n=14-\sum_{i=1}^r\mu(S_i)$.
\end{lemma}

\begin{theorem}\label{tbs1new}
With a single exception designated in Theorem \ref{t-rev2}, for any collection $S_1,...,S_r$ of singularity types mentioned in (\ref{e-quart})
and any partition $n=n_{re}+2n_{im}$, there is no real singularity weight $\varphi:\RR TT\to R\setminus\{0\}$ such that the number
$$\sum_{C\in V^{\RR}_4(S_1,...,S_r,\bp)}w_\varphi(C)$$
is invariant of the choice of a generic configuration $\bp\in{\mathcal P}_{n_{re},n_{im}}$.
\end{theorem}

\begin{proof} We proceed in three steps considering the failure of the count invariance in the wall-crossing events described in Sections \ref{secbs6}, \ref{secbs9}, and \ref{secbs_trans}, respectively.

\smallskip
{\it Step 1.} Observe that there exist quartics
\begin{align*}&C_1\in V^\RR_4(2A_1,A_3),\ C_2\in V^\RR_4(A_1,A_2,A_3),\ C_3\in V^{irr,\RR}_4(A_5),\\ &C_4\in V_4^\RR(A_1,A_5),\
C_5\in V^\RR_4(A_2,A_5),\ C_6\in V^\RR_4(A_7).\end{align*}
For example, $C_5$ is the union of a cuspidal cubic and its tangent line at the flex point, $C_6$ is the union of two conics intersecting at one point with multiplicity $4$. By Lemma \ref{TS1}(1), the above quartics admit deformations into quartics
\begin{align*}&C'_1\in V^{irr,\RR}_4(2A_1,A_2),\ C'_2\in V^{irr,\RR}_4(A_1,2A_2),\ C'_3\in V^{irr,\RR}_4(A_1,A_3),\\ &C'_4\in V_4^{irr,\RR}(A_1,A_4),\
C'_5\in V^{irr,\RR}_4(A_2,A_4),\ C'_6\in V^{irr,\RR}_4(A_6),\end{align*}
respectively.
That is, in the count of real quartics with singularity collections as in the latter list, we necessarily encounter walls-crossings that realize bifurcations of type
\begin{align*}&(A_2\Longrightarrow A_3),\ (A_2\Longrightarrow A_3),\ (A_1,A_3\Longrightarrow A_5),\\ &(A_4\Longrightarrow A_5),\
(A_4\Longrightarrow A_5), \ (A_6\Longrightarrow A_7),\end{align*}
respectively (see Section \ref{secbs6}). Hence, by Lemmas \ref{local-global} and \ref{lbs1}(4), in each of these wall-crossings a pair of real $\RR$-isosingular quartics appears or disappears breaking the invariance of the count.

\smallskip

{\it Step 2.}
Note that any curve $C\in V_4^{irr,\RR}(3A_2)$ has a real cusp. Hence, under the assumption $n_{re}>0$, we necessarily encounter a collision of the real cusp and of a real point of a configuration $\bp\in{\mathcal P}_{n_{re},n_{im}}$ (see Section \ref{secbs9}). Then Lemma \ref{lbs6}(2) implies the failure of invariance of the count with any real singularity weight $\varphi$.

\smallskip{\it Step 3.}
The failure of invariance of the count of curves in $V_4^{irr,\RR}(A_5)$ and in $V_4^{irr,\RR}(D_5)$ follows from Theorem \ref{tbs3new} since in both the cases we have $n=14-5=9$, and hence $n_{re}>0$. The same conclusion holds for the singularity collections $\{D_4\}$ and $\{E_6\}$ as long as $n_{re}>0$. Furthermore, in the two latter cases, the failure of the invariance follows from Remark \ref{r-fail1} since
\begin{equation}N^0_{10}(4,D_4)\cap{\mathcal P}_{0,5}\ne\emptyset\quad\text{and}\quad N^0_8(4,E_6)\cap{\mathcal P}_{0,4}\ne\emptyset\label{e-nonreal}\end{equation}
(here we use the fact that $\dim V_4(D_4)=10$ and $\dim V_4(E_6)=8$).

For the former relation in (\ref{e-nonreal}), consider the quartic
$$C=\{(x^2+y^2-2x)(x^2+y^2)+\eps x^4=0\},\quad 0<\eps\ll1.$$
Its real point set consists of a convex oval close to the circle $x^2+y^2-2x=0$, which passes through the singular point $(0,0)$ of type $D_4$. Pick a point $q$ inside the open disk bounded by this oval and take the section $h\in H^0(C,{\mathcal J}_{Z^{es}(C)/C}(4))$ induced by the curve $(x+1)C_{(q)}$, where $C_{(q)}$ is the $q$-polar curve of $C$. This, the zeros of $h$ are: $(0,0)$ of order $\varkappa(D_4)=6$ and $16-6=10$ other zeros of order $1$ located at $\{x+1=0\}\cap C$ and at the tangency points of $C$ and the lines through $q$. Clearly, all these $10$ points are non-real. For the latter relation in (\ref{e-nonreal}), we start with the quartic
$$C=\{y^3+y^4+x^4=0\}$$ and proceed as in the case of singularity $D_4$.
\end{proof}

\section{Failure of invariance, III: nodal-cuspidal curves}
\label{secbs12}

\begin{theorem}\label{tbs2new}
(1) For any $d\ge5$, each nonempty locus $V^{irr}_d(kA_1,lA_2)$, where $k+l=\frac{(d-1)(d-2)}{2}$, is $T$-smooth of dimension
$n=\frac{d(d+3)}{2}-k-2l$.

(2) Furthermore, if $l>0$, then for any partition $n=n_{re}+2n_{im}$,
there is no real singularity weight $\varphi:\RR TT\to R\setminus\{0\}$
such that the number
$$\sum_{C\in V_d^{irr,\RR}(kA_1,lA_2,\bp)}w_\varphi(C)$$
is invariant of the choice of a generic configuration $\bp\in{\mathcal P}(n_{re},n_{im})$.
\end{theorem}

\begin{proof} {\it Step 1.} Recall that by Lemma \ref{lbs-trans1}, $V^{irr}_d(kA_1,lA_2)\ne\emptyset$ iff $l\le\frac{3}{2}d-3$.
Let us prove a stronger claim: for any $0<l\le\frac{3}{2}d-3$, the locus $V^{irr,\RR}_d(kA_1,lA_2)$, where $k+l=\frac{(d-1)(d-2)}{2}$, is nonempty, and for any partition $n=n_{re}+2n_{im}$, there exists a generic configuration $\bp\in{\mathcal P}_{n_{re},n_{im}}$ and a curve $C_1\in V^{irr,\RR}_d(kA_1,lA_2,\bp)$ having at least one real cusp. The nonemptyness of $V^{irr,\RR}_d(kA_1,lA_2)$ follows by the construction in the proof of Lemma \ref{lbs-trans1}. If $l=1$, then every curve in $V^{irr,\RR}_d(kA_1,A_2)$ has a real cusp. Suppose that $l=\left[\frac{3}{2}d-3\right]\ge2$ and show that there is $C_2\in V^{irr,\RR}_d(kA_1,lA_2)$ with at least two real cusps. Having such a curve and deforming appropriately many pairs of complex conjugate cusps into pairs of complex conjugate nodes and a proper subset of real cusps into real nodes, we can obtain the desired curve with a real cusp for any $0<l'<l$. To obtain such a curve $C_2$, we slightly modify the construction in the proof of Lemma \ref{lbs-trans1}. If $d=2m$, $m\ge3$, we take a generic real nodal rational curve of degree $m+1$ with the Newton polygon
$$\conv\{(0,0),\ (m-2,0),\ (m,1),\ (1,m),\ (0,m-2)\}.$$
It is just a real plane nodal rational curve of degree $m+1$ with at least two real inflexion points, and hence its dual curve $C_2\in V_d^{irr,\RR}(kA_1,lA_2)$ has two real cusps. If $d=4m+1$, $m\ge1$, we take a real nodal rational curve of degree $2m+2$ with the Newton polygon
$$\conv\{(0,2),\ (3,0),\ (2m+1,0),\ (2m-1,3),\ (0,2m+2)\}.$$
Its dual curve $C_2\in V_d^{irr,\RR}(kA_1,lA_2)$ has a real cusp. Since $l=6m-2$, the number of real cusps is even as required. At last, if $d=4m+3$, $m\ge1$, we obtain the required curve $C_2\in V_d^{irr,\RR}(kA_1,lA_2)$ to be the dual to a real rational nodal curve of degree $2m+3$ with Newton polygon
$$\conv\{(0,2),\ (3,0),\ (2m+2,0),\ (2m,3),\ (1,2m+2),\ (0,2m)\}.$$
Finally, we take a configuration $\bp\in{\mathcal P}_{n_{re},n_{im}}$ in general position on $C_2$ and notice that $\bp$ is, in fact, generic in ${\mathcal P}_{n_{re},n_{im}}$ due to the transversality statement in Lemma \ref{lbs-trans1}(2).

\smallskip{\it Step 2.} Suppose that $1\le l\le \frac{3}{2}d-3$, $k+l=\frac{(d-1)(d-2)}{2}$. Then $n=3d-1-l$.

If $3d-1-l$ is odd, we necessarily have $n_{re}>0$ and then, by results of Step 1, we unavoidably encounter the collision of a real point in $\bp\in{\mathcal P}_{n_{re},n_{im}}$ and a real cusp of a curve in $V_d^{irr,\RR}(kA_1,lA_2)$, which by Lemma \ref{lbs6}(2) breaks the invariance of the count for any choice of a real singularity weight $\varphi$. For the same reason, the failure of the invariance occurs when $3d-1-l$ is even and $n_{re}>0$.

\smallskip{\it Step 3.} Let $3d-1-l$ be even and $n_{re}=0$.

If $d$ is odd, then
$$l-1\le\frac{3}{2}d-4\quad\Longrightarrow\quad l-1\le\frac{3}{2}d-\frac{9}{2}=\frac{3}{2}(d-1)-3.$$
Hence there exists a real nodal-cuspidal rational curve $C_{d-1}$ of degree $d-1$ with $l-1$ cusps. Let $L$ be the tangent line to $C_{d-1}$ at a generic real point. Then $C_{d-1}L\in V_d^\RR(kA_1,(l-1)A_2,A_3)$. By Lemma \ref{TS1}(2), the locus $V_d(kA_1,(l-1)A_2,A_3)$ is $T$-smooth at $C_{d-1}L$, and hence (see Lemma \ref{TS2}), $C_{d-1}L$ can be deformed into a curve $C\in V_d^{irr,\RR}(kA_1,lA_2)$. Furthermore, by Lemma \ref{local-global}, we encounter a bifurcation of type $(A_2\Longrightarrow A_3)$, which breaks the invariance of the count with any real singularity weight $\varphi$ (see Lemma \ref{lbs1}(2,4)).

If $d$ is even and $l=1$ (the case considered by Welschinger \cite{Wel1}), we take $C_{d-1}$ to be a real rational nodal curve of degree $d-1$ and proceed as in the preceding paragraph.

Suppose now that $d$ is even and $l\ge2$, and suppose that there exists a real singularity weight inducing an invariant count of curves in $V_d^{irr,\RR}(kA_1,lA_2)$.

(i) Since
$$l-2\le\left(\frac{3}{2}d-3\right)-2<\frac{3}{2}(d-1)-3,$$
there exists a real nodal-cuspidal curve $C_{d-1}$ of degree $d-1$ having $l-2$ cusps and at least one real inflexion point (see the construction in Step 1). Then $C_{d-1}L\in V_d^\RR(kA_1,(l-2)A_2,A_5)$, where $L$ is the tangent line to $C_{d-1}$ at a real inflexion point. By Lemma \ref{TS1}(2), $V_d(kA_1,(l-2)A_2,A_5)$ is $T$-smooth at $C_{d-1}L$, and hence (see Lemma \ref{TS2}) $C_{d-1}L$ can be deformed into a curve $C\in V_d^{irr,\RR}(kA_1,lA_2)$. Furthermore, by Lemma \ref{local-global}, we encounter a bifurcation of type $(2A_2\Longrightarrow A_5)$ which implies (see Lemma \ref{lbs1}(2,4)) that
\begin{equation}\varphi(A_2^{re})^2=\varphi(2A_2^{im})\label{e-inv2}\end{equation}
(here we write $A^{re}_2$ for a real cusp, and $2A_2^{im}$ for a pair of complex conjugate cusps).

(ii) If $l<\frac{3}{2}d-3$, then there exists a curve $C\in V_d^{irr,\RR}((k-1)A_1,(l+2)A_2$ having at least one real cusp (see Step 1). This curve admits a deformation into a curve in $V_d^{irr,\RR}(kA_1,lA_2)$, and hence, we encounter a bifurcation of type $(A_1\Longrightarrow A_2)$, which by Lemma \ref{lbs1}(2,3) yields
\begin{equation}\varphi(A_1^h)=-\varphi(A_1^e).\label{e-inv3}\end{equation}
If $l=\frac{3}{2}d-3$, then we consider a real rational curve $C\in V_d^{irr,\RR}((k-1)A_1,(l-1)A_2,A_4)$. The latter curve can be constructed as the dual one to the real rational curve with the Newton polygon
$$\conv\{(0,2),(4,0),(m+1,0),(0,m+1)\},\quad m=\frac{d}{2},$$
whose lift to the toric surface associated with the given polygon is a generic real nodal rational curve quadratically tangent to the toric divisor corresponding to the edge $[(0,2),(4,0)]$ (cf. Lemma \ref{lbs7}). As above, the curve $C$ can be deformed into a curve in $V_d^{irr,\RR}(kA_1,lA_2)$, and hence we encounter a bifurcation of type $(A_1,A_2\Longrightarrow A_4)$. By Lemma \ref{lbs1}(2,3), the assumed invariance of the count would yield
$$\varphi(A_1^h)\varphi(A^{re}_2)=-\varphi(A_1^e)\varphi(A^{re}_2)$$
equivalent to (\ref{e-inv3}).

(iii) There exists a curve $C\in V_d^{irr,\RR}((k-1)A_1,(l-2)A_2,E_6)$. Indeed, for example we can consider the dual curve to the generic real rational nodal curve with Newton polygon
$$\conv\{(0,1),(4,0),(m+1,0),(0,m+1)\}$$ (cf. Lemma \ref{lbs7}). Combining Lemmas \ref{TS}(1) and \ref{TS2}, we derive that $C$ can be deformed into a curve in $V_d^{irr,\RR}(kA_1,lA_2)$, and hence we encounter a bifurcation of type $(A_1,2A_2\Longrightarrow E_6)$. Then Lemma \ref{lbs3} yields
\begin{equation}\varphi(A_1^e)\varphi(2A_2^{im})=\varphi(A_1^h)\varphi(A_2^{re})^2.\label{e-inv1}\end{equation}

However, the system of equations (\ref{e-inv2}), (\ref{e-inv3}), and (\ref{e-inv1}) has no nonzero solution.

The proof of Theorem \ref{tbs2new} is completed.
\end{proof}

\section{Enumeration of real three-cuspidal quartics}\label{secbs4}

Here we prove Theorem \ref{t-rev2}.

By Lemma \ref{rbs2}, $\dim_\RR V_4^{irr,\RR}(3A_2)=8$. Let $\{\bp(t)\}_{t\in[0,1]}$ be a generic smooth path in ${\mathcal P}_{0,4}$. By Lemma \ref{lbs-trans1}, the number $\#V^{irr,\RR}_4(3A_2,\bp(t))$ remains constant away of the walls parameterizing degenerate curves or quartics $C\in V_4^{irr,\RR}(3A_2,\bp(t))$ such that $\Sing(C)\cap\bp(t)\ne\emptyset$.

Observe that, in our situation, the latter wall does not exist. Indeed, $\dim_\RR{\mathcal P}_{0,4}=16$. If we fix $3$ pairs of generic complex conjugate points, then we obtain a subfamily in $V_4^{irr,\RR}(3A_2)$ of real dimension $2$. Hence each of the singular points of a curve in the latter family moves in $\PP^2$ along a trajectory of real dimension $\le2$, which means that the fourth pair of complex conjugate points is left with at most two real degrees of freedom. Thus, the subfamily of configurations $\bp\in{\mathcal P}_{0,4}$ such that there exists a quartic $C\in V_4^{irr,\RR}(3A_2)$ with $\Sing(C)\cap\bp\ne\emptyset$ has real dimension $\le 3\times 4+2=14$, i.e., has codimension $2$ in ${\mathcal P}_{0,4}$.

By Lemma \ref{rbs2}, a reduced real quartic $C'$, which might be a degeneration of a three-cuspidal quartic and varies in an equisingular family of real dimension $7$, should have one of the following collections of singularities:
$$\{2A_2,A_3\},\ \{A_2,A_5\},\ \{A_2,D_5\},\ \{D_7\},\ \{E_7\}.$$
By \cite[Theorem 4]{Lya}, the last three singularity collections are not possible. The first one is not possible neither, since it corresponds to a reducible quartic with two cusps that does not exist.

In the case of $C'\in V_4^{\RR}(A_2,A_5)$, the curve $C'$ must be the union of a real cuspidal cubic $C_3$ and the tangent line $L$ to $C_3$ at the inflexion point.
By Lemmas \ref{local-global} and \ref{lbs1}(2), the considered wall-crossing is described by the bifurcation of type $(2A_2\Longrightarrow A_5)$, in which a quartic with three real cusps turns into  quartic with one real and two complex conjugate cusps. Thus, the number $\#V_4^{irr,\RR}(3A_2,\bp(t))$ does not change.

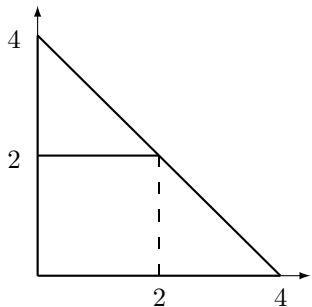
\begin{figure}
\setlength{\unitlength}{0.8cm}
\begin{picture}(6,5)(-2,0.5)
\thinlines
\put(1,1){\vector(0,1){4.5}}\put(1,1){\vector(1,0){4.5}}

\dashline{0.2}(3,1)(3,3)

\thicklines
\put(1,1){\line(1,0){4}}\put(1,1){\line(0,1){4}}
\put(1,5){\line(1,-1){4}}\put(1,3){\line(1,0){2}}

\put(2.9,0.5){$2$}\put(4.9,0.5){$4$}
\put(0.5,2.8){$2$}\put(0.5,4.8){$4$}

\end{picture}
\caption{Toric degeneration in the proof of Theorem \ref{tbs1new}(1)}\label{fbs2}
\end{figure}

At last, the only family of real non-reduced quartics of real dimension $7$ is formed by the unions of a conic and of a double line transversally intersecting the conic. We shall show that a generic curve of this kind cannot be deformed into a three-cuspidal quartic. Indeed, without loss of generality, we can consider affine quartics and can assume that the double line is just the $x$-axis, whereas the conical component is in general position. A possible one-parameter deformation of that non-reduced quartic is inscribed into a flat one-parameter family of surfaces, whose general fiber is $\PP^2$, and the central fiber is the union of the plane and of the toric surface associated with the trapezoid $T=\conv\{(0,0),(4,0),(0,2),(2,2)\}$ (see Figure \ref{fbs2}). However, there does not exist an affine curve with Newton polygon $T$ that has three cusps and transversally intersects the toric divisor $\Tor([(0,2),(2,2)])$ at two distinct points.

\section{Example: enumeration of three-cuspidal quartics revisited}
\label{secbs6new}

It is natural to modify the enumerative problem considered in the present paper in order to obtain real enumerative invariants.
Here we exhibit an enumerative invariant that counts real three-cuspidal quartics with the cusps in a fixed position. The problem reduces to the count of real conics tangent to all three coordinate axes. We plan to develop this observation and obtain a series of real enumerative invariants counting rational (or even non-rational) curves satisfying point constraints as well as certain relative constraints along an anti-canonical divisor.
Some related problems can be found in \cite{IS} and \cite[Section 5]{Sh23}.

\begin{figure}
\setlength{\unitlength}{1cm}
\begin{picture}(12,4)(0,-0.4)
\includegraphics[width=0.9\textwidth, angle=0]{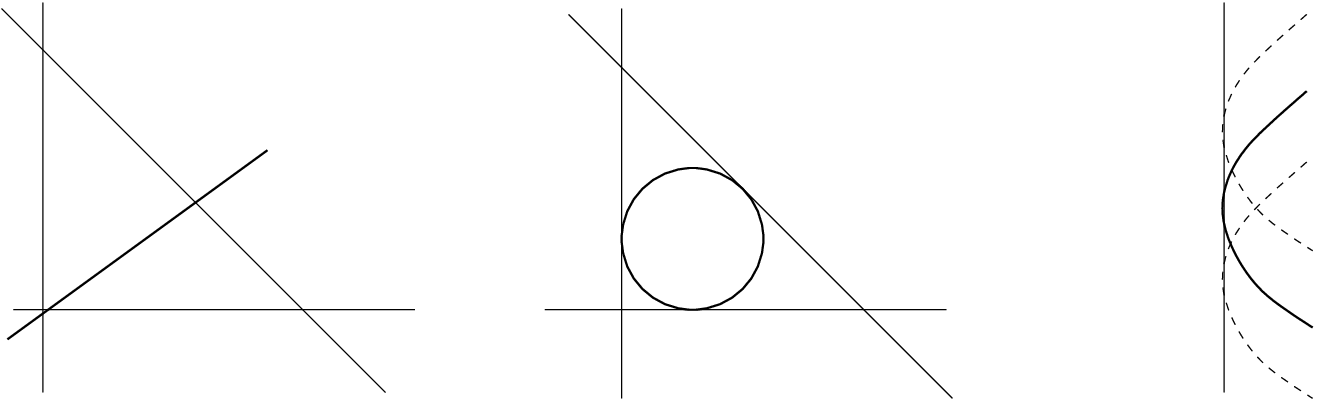}
\put(-10.2,0.8){$\bullet$}\put(-9.65,1.2){$\bullet$}\put(-5.72,1.24){$\bullet$}\put(-5.17,0.64){$\bullet$}
\put(-0.85,1.5){$\bullet$}\put(-0.57,1.49){$\bullet$}
\put(-10.3,1.1){$p_1$}\put(-9.7,1.5){$p_2$}\put(-6,1.3){$p_1$}\put(-5.2,0.4){$p_2$}
\put(-1.2,1.55){$p_1$}\put(-0.3,1.5){$p'_1$}
\put(-9.3,-0.4){(a)}\put(-5,-0.4){(b)}\put(-0.6,-0.4){(c)}
\end{picture}
\caption{Conics tangent to the coordinate axes} \label{fbs3}
\end{figure}

The family $V_4^{irr}(3A_2^{fix})$ of plane quartics having cusps at fixed non-collinear real point $z_1,z_2,z_3\in\PP^2$, is $T$-smooth of dimension $2$: indeed, that standard Cremona quadratic transformation with fundamental points $z_1,z_2,z_3$ takes the considered family into the family of conics that are tangent to each of the coordinate axes. Pick a triangle $T_+\simeq(\RR_+)^2$ in the complement of the three lines $\PP_\RR^2\setminus\left((z_1z_2)\cup(z_2z_3)\cup(z_3z_1)\right)$. Then we pick two points $p_1,p_2\in T_+$ and consider the problem of enumeration of real quartics in $V_4^{irr,\RR}(3A_2^{fix},(p_1,p_2))$. The above Cremona transformation converts this problem into the problem of enumeration of real conics passing through two points in the positive quadrant $(\RR_+)^2$ and tangent to each of the three coordinate axes. Recall that the complex answer is $4$ (see, for instance, \cite[Page 40]{Katz}). Let us verify that these (complex) conic all are real independently of the generic choice of $p_1,p_2\in T_+$. Consider the moduli space of stable unmarked conics $\overline{\mathcal M}_0(\PP^2,2)$, the subspace
$${\mathcal V}=\left\{[\bn:\PP^1\hookrightarrow\PP^2]\in{\mathcal M}_0(\PP^2,2)\ :\ \bn^*(\{x_i=0\})=2q_i,\ i=0,1,2\right\}$$ and its closure $\overline{\mathcal V}\subset \overline{\mathcal M}_0(\PP^2,2)$. It has dimension $2$, and the only one-dimensional strata in $\overline{\mathcal V}\setminus{\mathcal V}$ are formed by elements
$[\bn:\PP^1\to\PP^2]$ such that
\begin{itemize}\item $\bn_*\PP^2=2L$, where $L$ is a line passing through the intersection point of two coordinate axes, but different from these axes,
\item $\bn:\PP^1\to L$ is a double covering ramified at the intersection points with the coordinate axes.
\end{itemize}
The only possible realization of such a degeneration is exposed in Figure \ref{fbs3}(a).
Moving, say, $p_1$ away from $L$ yields two positions that are symmetric with respect to $L$, and hence the same number of real conics in count. To compute this number, we place $p_1,p_2$ on coordinate axes on the boundary of the positive quadrant: this degenerate problem, clearly, has a unique solution (see Figure \ref{fbs3}(b)). Shifting $p_1$ and $p_2$ inside the positive quadrant, we obtain two real local deformations in a neighborhood of each of $p_1,p_2$ Figure \ref{fbs3}(c), that are independent of each other, and hence the final answer is $4$.

\bmhead{Acknowledgments}

The author was partly supported by
the Bauer-Neuman Chair in Real and Complex Geometry. The author would like to thank Omer Bojan with whom we discussed the content of the paper and who helped much in analysis of deformations of singularities; however, finally he declined to be a coauthor. Special thanks are due to the unknown referee for the careful reading of the paper and for the criticism which helped me to make corrections and improve the readability.

\section*{Declarations}

\bmhead{Conflict of interests} The author declares that he has no conflict of interests.

\end{document}